\DeclareMathOperator{\tr}{tr}
\newcommand{\R}{\mathbb R}
\newcommand{\gep}{\varepsilon}
\newcommand{\RR}{\mathbb{R}}
\newcommand{\RRn}{\mathbb{R}^n}
\newcommand{\open}{(\Omega_t)_{t\in(0,T)}}
\newcommand{\close}{(\mathcal{F}_t)_{t\in(0,T)}}
\providecommand{\norm}[1]{\lVert#1\rVert}
\providecommand{\abs}[1]{\lvert#1\rvert}
\theoremstyle{definition}
\newtheorem{thm}{Theorem}[section]
\newtheorem{cor}[thm]{Corollary}
\newtheorem{prop}[thm]{Proposition}
\newtheorem{lem}[thm]{Lemma}
\newtheorem{defin}[thm]{Definition}
\newtheorem{rem}[thm]{Remark}
\newtheorem{exa}[thm]{Example}
\newcommand{\subjclass}[1]{\bigskip\noindent\emph{2010 Mathematics Subject Classification:}\enspace#1}
\numberwithin{equation}{section}
\begin{document}

\title{On viscosity and equivalent notions of solutions\\ for anisotropic geometric equations}
\author{Cecilia De Zan \& Pierpaolo Soravia\thanks{email: soravia@math.unipd.it.  }\\
Dipartimento di Matematica\\ Universit\`{a} di Padova, via Trieste 63, 35121 Padova, Italy}

\date{}
\maketitle

\begin{abstract}
We prove that viscosity solutions of geometric equations in step two Carnot groups can be equivalently reformulated by restricting the set of test functions at the singular points. These are characteristic points for the level sets of the solutions and are usually difficult to deal with. 
A similar property is known in the euclidian space, and in Carnot groups is based on appropriate properties of a suitable homogeneous norm.
We also use this idea to extend to Carnot groups the definition of generalised flow, and it works similarly to the euclidian setting.
These results simplify the handling of the singularities of the equation, for instance to study the asymptotic behaviour of singular limits of reaction diffusion equations. 
We provide examples of using the simplified definition, showing for instance that boundaries of strictly convex subsets in the Carnot group structure become extinct in finite time when subject to the horizontal mean curvature flow even if characteristic points are present.
\end{abstract}

\subjclass{Primary 35D40; Secondary 35F21, 53C44, 49L20.}

\section{Introduction}

In this paper we want to discuss the notion of viscosity solution for geometric equations,
describing weak front propagation in step two Carnot groups, of the form
\begin{equation}\label{eqflow}
u_t(x,t)+F(x,t,Xu(x,t),X^2u(x,t))=0,\quad(x,t)\in\R^n\times(0,+\infty).
\end{equation}
Here the operator $F=F(x,t,q,A)$, $F:\R^n\times(0,+\infty)\times\R^m\backslash\{0\}\times{\mathcal S}^m\to\R$ is elliptic and {\it geometric}, meaning that it is positively one homogeneous in the pair $(q,A)\in\R^m\backslash\{0\}\times{\mathcal S}^m$ and invariant in the last argument with respect to matrices of the form $\mu\; q\otimes q$, $\mu\in\R$, as we make it more precise later. The notation ${\mathcal S}^m$ indicates the set of symmetric $m\times m$ matrices, $n,\;m\geq2$. Therefore it is possible that $F$ has a singularity at $q=0$ and we assume that it behaves nicely, namely
$$F_*(x,t,0,\mathbb{O})=F^*(x,t,0,\mathbb{O})=0,$$
where the stars above indicate the lower and upper semicontinuous envelopes, respectively. The notation $Xu$ indicates the horizontal gradient with respect to a family of vector fields $\{X_1,\dots,X_m\}$, seen as differential operators,
\begin{equation}\label{eqfields}
X_j=\sum_{i=1}^n\sigma_{i,j}(x)\partial_i,\quad j\in\{1,\dots,m\},
\end{equation}
generators of a step two Carnot group. In particular, for a smooth function $u$, $Xu=\nabla u\;\sigma(x)$, $\sigma=(\sigma_{i,j})_{i,j}$ and if $m<n$ the equation (\ref{eqflow}) has singularities when $Xu=0$, i.e. at characteristic points of the level set of $u$, therefore on a subspace of positive dimension.
Notation $X^2u$ indicates instead the horizontal hessian, namely $X^2u=\left(X_iX_ju\right)^*_{i,j=1,\dots,m}$, the symmetrised matrix of second derivatives.
This compares to the usual euclidian case when $\sigma\equiv I_{n}$ the identity matrix, where $Xu\equiv \nabla u$ is the standard gradient, and the singularity is just at the origin. 
In the special case when the operator $F:\RR^m\backslash\{0\}\times\mathcal{S}^m\to\RR$ is defined as
\begin{equation}\label{mean curvature hamiltonian dim m}
F(q,A)=-\tr[\big(I-\frac{q}{\abs{q}}\otimes\frac{q}{\abs{q}}\big)A],
\end{equation}
and moreover $m=n$ and $\sigma\equiv I_n$, (\ref{eqflow}) reads as the well known the mean curvature flow equation
\begin{equation}\label{eqmeanflow}
u_t(x,t)-\tr[\big(I-\frac{\nabla u}{\abs{\nabla u}}\otimes\frac{\nabla u}{\abs{\nabla u}}\big)D^2u]=0.
\end{equation}
In a group setting instead, (\ref{eqmeanflow}) becomes
\begin{equation}\label{eqlevelset(vector fieldX)}
u_t(x,t)-\sum_{i,j=1}^m\Big(\delta_{ij}-\frac{X_iu(x,t)X_ju(x,t)}{\sum_{i=1}^m(X_iu(x,t))^2}\Big)X_iX_ju(x,t)=0,
\end{equation}
which is the horizontal mean curvature flow equation in the Carnot group.

Due to the presence of singularities and the fact that we do not expect classical solutions in general in (\ref{eqflow}), we will use as usual the notion of viscosity solution, as in Crandall, Ishii, Lions \cite{cil}, Chen, Giga, Goto \cite{cgg}. In our main result, we prove an equivalent notion of solution where we use a restricted class of test functions at singular points, with the property that if the horizontal gradient vanishes, then also the horizontal hessian vanishes as well.
This equivalent notion of solution simplifies the dealing with singularities and was first proved in the euclidian setting for the mean curvature flow equation by Barles, Georgelin \cite{bage} to study the convergence of numerical schemes. 
We also use this approach to extend to our setting the notion of generalised flow, introduced as a general and flexible method to study singular limits in pdes giving rise to propagating fronts by Barles and Souganidis \cite{basou} and applied in several situations in the euclidian setting, see also Barles and Da Lio \cite{badl}.
As a matter of fact, we will use this notion of solution in a forthcoming paper, when we discuss the singular limit of reaction diffusion equations for anisotropic and degenerate diffusions \cite{deso5}, while we develop here the preliminary needed tools on weak front propagation. This simplified approach, which is particularly helpful when studying approximations of (\ref{eqflow}) of different nature, therefore extends to the Carnot group setting with similar properties. Hopefully it could also prove useful to tackle the comparison principle for viscosity solutions of (\ref{eqflow}), which is still missing in the literature in full generality.
To achieve our goal we need to modify the usual approach with the doubling of variables in viscosity solutions, by changing the test function, since the euclidian norm does not work for singular anisotropic equations as (\ref{eqflow}), and replace it instead with an homogeneous norm, adapted to the Carnot group structure.
As an application, we show how one can more easily check that functions are super or subsolutions of (\ref{eqflow}) especially at singular points, by providing explicit examples of super or subsolutions to be used as barriers. If in particular we consider the recent notion of v-convex functions with respect to the family of vector fields, we can prove, coupling our result with a comparison principle, that their level sets become extinct in finite time under the horizontal mean curvature flow equation, by constructing suitable supersolutions of (\ref{eqflow}).

Equation (\ref{eqflow}) appears in the level set approach to the weak propagation of hypersurfaces, where we want to discuss the propagation of interfaces, boundaries of open sets, with prescribed normal velocity. In the euclidian space usually the velocity $V=V(x,n,Dn)$, where $n$ is the exterior normal. Indeed, if $\Omega_t\subset\R^n$ is a family of open sets, $\Gamma_t=\partial\Omega_t$ is the propagating front, and there exists a smooth function $u:\RRn\times[0,+\infty)\to\RR$ such that
$$\Gamma_t=\{x\in\RRn:u(x,t)=0\},\quad\Omega_t=\{x\in\RRn:u(x,t)>0\},\quad \nabla u\neq0\mbox{ on }\Gamma_t$$
then one computes
$$V=\frac{u_t}{|\nabla u|},\quad\mathbf n=-\frac{\nabla u}{|\nabla u|}\quad\mbox{and}\quad D\mathbf n=-\frac{1}{|\nabla u|}\Big(I-\frac{\nabla u\otimes \nabla u}{|\nabla u|^2}\Big)D^2u$$
and so $u$ formally satisfies
\begin{equation}\label{level set equation}u_t=G(x,t,\nabla u,D^2u),\end{equation}
where $G$ is related to $V$ by
$$G(x,t,p,A)=|p|V\Big(x,t,-\frac p{|p|},-\frac1{|p|}(I-\frac{p\otimes p}{|p|^2})A\Big),\qquad(x,p,A)\in\RRn\times\R^n\backslash\{0\}\times\mathcal S^n.$$
In our case the anisotropy of the velocity will be for instance exploited by the fact that
$$G(x,t,p,A)=F(p\sigma(x),\;^t\sigma(x)A\sigma(x)),$$
so that as an operator $G(x,t,\nabla u,D^2u)=F(Xu,X^2u)$.
The novelty here with respect to the classical cases is that while in the euclidian case $\sigma=I$, and its square is a non degenerate matrix, here the diffusion matrix $\sigma(x)^t\sigma(x)$ is not only anisotropic but also degenerate. When the family of vector fields does not span the whole $\R^n$ at each point, this fact adds metric singularities to the usual one of geometric equations.

The geometric property of the level set approach is based on the fact that if $u$ solves (\ref{eqflow})
and $\psi:\R\to\R$ is smooth and increasing, then also $\psi(u)$ solves the same equation. As a consequence, when a comparison principle holds true, it is easy to see that if $u^1_o$ and $u^2_o$ are two initial conditions such that
$$\Gamma_o=\{x:u^1_o(x)=0\}=\{x:u^2_o(x)=0\},$$
and $u^1, u^2$ are the corresponding solutions in (\ref{eqflow}), then one has
\begin{equation*}
{\{x:u^1(x,t)=0\}=\Gamma_t=\{x:u^2(x,t)=0\}},\quad \mbox{for all }t>0.
\end{equation*}
One can therefore { define} the family of closed sets $(\Gamma_t)_t$ to be the geometric flow of the front or interface $\Gamma_o$ with the prescribed normal velocity.

The notion of horizontal normal and horizontal mean curvature is due to Danielli, Garofalo, Nhieu \cite{dagani}. Recently equation (\ref{eqflow}) has been studied by several authors. Existence results are available in the work of Capogna, Citti \cite{caci}, who proved existence in Carnot groups by vanishing viscosity riemannian approximations. Dirr, Dragoni, Von Renesse \cite{didr} used stochastic approximations to show existence for more general H\"ormander structures. Capogna, Citti, Manfredini \cite{cacima} prove uniform regularity estimates on the riemannian vanishing viscosity approximations for the flow of graphs, that also apply to prove existence for (\ref{eqflow}) in that case. On uniqueness results the literature is far less complete. Capogna, Citti \cite{caci} proved a comparison principle if either one of the functions to compare is uniformly continuous or their initial condition does not depend on the vertical coordinate, thus avoiding characteristic points in the initial front. A very recent paper by Baspinar, Citti \cite{baci} finds a comparison principle in Carnot groups of step two as a consequence of the fact that all solutions are limits of suitable families of riemannian regularisations. We remark the fact that in \cite{caci}, \cite{didr} the authors use a notion of solution that differs from standard viscosity solutions at singular points. However their notion of solution turns out to be equivalent to viscosity solutions as a consequence of our result.
One of the referees pointed out to us the work of Ferrari, Liu and Manfredi \cite{flm} where the authors use an approach similar to ours in the case of the horizontal mean curvature flow equation in the Heisenberg group, and they show a comparison principle for axisymmetric viscosity solutions.

We recall that the level set method for geometric flows was proposed by Osher-Sethian \cite{os} for numerical computations of geometric flows.
The rigorous theory of weak front evolution started with the work by Evans-Spruck \cite{es} for the mean curvature flow and by Chen-Giga-Goto \cite{cgg} for more general geometric flows.
For the mathematical analysis of the level set method via viscosity solutions, the reader is referred to the book by Giga \cite{gi}, where the approach is discussed in detail, see also Souganidis \cite{soug} and the references therein for the main applications of the theory.


\section{Step two Carnot groups and level set equations on the group}

In this paper we consider in $\R^n$ a family of vector fields
${\mathcal X}=\{X_1,\dots,X_m\}$ written as differential operators as in (\ref{eqfields}) and consider
$\sigma:\R^n\to\R^{n\times m}$ which is the matrix valued family of the coefficients. We will indicate $\sigma_j$ the $j-$th column of $\sigma$ so that
$$X_j\;I(x)=\sigma_j(x),\quad j\in\{1,\dots,m\}$$
where $I(x)$ is the identity map in $\R^n$ and in general $X_j$ applied to a vector valued smooth function $\varphi$ means the vector whose entries are given by $X_j$ applied to the components of $\varphi$.
The vector fields of the family are throughout the paper assumed to be generators of a step two Carnot group.
To be more precise we rely on the following definition, see the book by Bonfiglioli, Lanconelli, Uguzzoni \cite{bolaug}, which we refer the reader to, for an introduction to the subject.
\begin{defin}
We say that $G=(\R^n,\circ)$ is a Lie group if $\circ$ is a group operation on $\R^n$ and the map $(x,y)\mapsto x^{-1}\circ y$ is smooth. 

\noindent
We then say that 
$(G,\circ,\delta_\lambda)$ is a step two Carnot group if we can split $\R^n=\R^m\times\R^{n-m}$, $x=(x_h,x_v)$, $m<n$, and for all $\lambda>0$ the family of dilations $\delta_\lambda(x)=(\lambda x_h,\lambda^2 x_v)$ are automorphisms of the group (the group is homogeneous). Moreover the family of vector fields $\mathcal X$ are left invariant on $G$ with respect to the group operation, that is for all $\varphi\in C^{\infty}(\R^n)$ and all $\alpha\in\R^n$ we have that
$$X_j(\varphi(\alpha\circ x))=(X_j\varphi)(\tau_\alpha(x)),\quad j\in\{1,\dots,m\},$$
where $\tau_\alpha(x)=\alpha\circ x$ is the left traslation, and the following H\"ormander property is satisfied
$$\hbox{span}\{X_i(x),[X_j,X_k](x):i,j,k\in\{1,\dots,m\}\}=\R^n,\quad\hbox{for all }x\in\R^n,$$
so that the family of vector fields $\mathcal X$, together with their first order Lie brackets, generates $\R^n$ at every point (the Carnot group is step two).

\noindent
The vector fields of the family $\mathcal X$ are said to be generators of the Carnot group.
\end{defin}

Following \cite{bolaug}, it is then well known that if $\mathcal X$ generate a step two Carnot group, then, by a suitable change of variables, we can suppose that
\begin{equation}\label{eqcarnotstr}
\sigma(x)=\left(\begin{array}{c}
I_m\\^t(Bx_h)\end{array}\right),
\end{equation}
where $I_m$ is the $m\times m$ identity matrix, $Bx_h=(B^{(1)}x_h,\dots,B^{(n-m)}x_h)$, and $B^{(j)}$, $j\in\{1,\dots,n-m\}$ are skew symmetric, linearly independent, $m\times m$ matrices.
In addition, $\R^n$ has the group structure with the operation
$$x\circ y=(x_h+y_h,x_v+y_v+<Bx_h,y_h>),$$
with the notation $<Bx_h,y_h>=(B^{(1)}x_h\cdot y_h,\dots,B^{(n-m)}x_h\cdot y_h)$. With this group operation it is clear that $x^{-1}=-x$ and $0$ is the identity element of the group.

Moreover we notice that the jacobian of the left traslation has the following structure
$$D\tau_\alpha(x)=\left(\begin{array}{cc}
I_m\quad &{\mathbb O}_{m\times (n-m)}\\^t(Bx_h)&I_{n-m}
\end{array}\right),$$
so the first $m$ columns of the jacobian give the matrix $\sigma(x)$.
It is also good to remember that for $\lambda>0$ the family $\mathcal X$ is homogeneous of degree one with respect to the dilations, namely
$$X_j(\varphi(\delta_\lambda(x)))=\lambda(X\varphi)(\delta_\lambda(x)),\quad j\in\{1,\dots,n-m\},$$
for all $\varphi\in C^{\infty}(\R^n)$.

\begin{exa}
The well known example of the Heisenberg group comes from $\R^3=\R^2\times\R$ and the single matrix $$B=\left(\begin{array}{cc}0\quad&1\\-1&0\end{array}\right).$$
\end{exa}

For our purposes, given a smooth function $u\in C^2(\R^n)$ we indicate the {\it horizontal gradient} (here gradients are row vectors) as
$$Xu(x)=\nabla u(x)\;\sigma(x),$$
and the {\it horizontal hessian} as
$$X^2u(x)=\left(X_jX_k\;u(x)\right)^*_{j,k=1,\dots,n-m}=\;^t\sigma(x)D^2u(x)\sigma(x).$$
We just observe that $A^*=(A+\;^tA)/2$ indicates the symmetrisation and that the first order terms in the second derivatives of $X^2$ cancel out by direct computation since $\sigma$ only depends on the first $m$ variables.

In $\R^n$, taking advantage of the group structure of the family of vector fields, we want to study the problem of weak front propagation by extending the now classical level set idea.
Let $F:\R^n\times(0,+\infty)\times\R^m\backslash\{0\}\times\mathcal S^m\to\RR$ be a continuous function, locally bounded at points of the form $(x,t,0,A)$, where $\mathcal S^m$ denotes the space of the $m\times m$ symmetric matrices. We assume on $F$ the following structure conditions.
\begin{description}
  \item[(F1)] $F$ satisfies
      \begin{equation}\label{0 condition}
      F^*(x,t,0,\mathbb{O})=F_*(x,t,0,\mathbb{O}),\qquad\mbox{for all }(x,t)\in\R^n\times(0,+\infty);
      \end{equation}
  \item[(F2)] $F$ is elliptic, i.e. for any $(x,t)\in\R^n\times(0,+\infty),\;p\in\R^m\backslash\{0\}$ and $A,B\in\mathcal S^m$
  \begin{equation}\label{ellipticity condition}
  F(x,t,p,A)\leq F(x,t,p,B),\quad\mbox{if }A\geq B;
  \end{equation}
  \item[(F3)] $F$ is \emph{geometric}, i.e.,
 \begin{equation}\label{geometric condition}
 F(x,t,\lambda p,\lambda A+\mu(p\otimes p))=\lambda F(x,t,p,A)\quad\mbox{for all }\lambda>0\mbox{ and }\mu\in\RR
 \end{equation}
for every $(x,t)\in\R^n\times(0,+\infty),\;p\in\R^m\backslash\{0\}$ and $A\in\mathcal S^m$.
\end{description}
In the above, we are using the following notation for the lower semicontinuous extension of $F$ at the singular points.
$$F_*(x,t,0,A)=\lim_{r\to0+}\inf\{F(y,t,q,B):q\neq0,\;|(y,q,B)-(x,0,A)|\leq r\},$$
and similarly for the upper semicontinuous extension $F^*$. Notice in particular that the geometric property of $F$ implies $F_*(x,t,0,\mathbb{O})=0$ for all $(x,t)\in\R^n\times(0,+\infty)$.

We want to discuss the notion of solution for the equation
\begin{equation}\label{eqlevelset}
u_t(x,t)+F(x,t,Xu,X^2u),\quad(x,t)\in\R^n\times(0,+\infty),\end{equation}
where now only the horizontal first and second derivatives of the unknown function appear in the equation.
Notice that in our group setting, the operator $F$ in (\ref{eqlevelset}), written in the usual coordinates of $\R^n$ becomes
\begin{equation}\label{eqopg}
G(x,t,p,A)=F(x,t,p\sigma(x),\;^t\sigma(x)A\sigma(x)),
\end{equation}
$G:((\R^n\times(0,+\infty)\times\R^n)\backslash\{(x,t,p):p\sigma(x)=0\})\times {\mathcal S}^m\to\R.$

\begin{rem}
We easily show in a moment that $G$ preserves the assumptions (F1), (F2), (F3), however the singularities of $G$ are not just at the origin but in the whole of the subset
$$S=\{(x,t,p,A)\in\R^n\times(0,+\infty)\times\R^m\times{\mathcal S}^m:p\sigma(x)=0\},$$
where now for all $(x,t,A)\in\R^n\times(0,+\infty)\times{\mathcal S}^m$, the set $\{p:(x,t,p,A)\in S\}$ is a varying subspace, not necessarily trivial if the family of vector fields $\mathcal X$ does not span $\R^n$ at $x$. In this sense the operator $G$ is not covered by the standard theory of the anisotropic operators.\\
Operator $G$ is elliptic since if $A\geq B$, then $^t\sigma(x)A\sigma(x)\geq \;^t\sigma(x)B\sigma(x)$ and thus $G(x,t,p,A)\leq G(x,t,p,B)$.\\
Operator $G$ is also geometric since
$$\begin{array}{l}
G(x,t,\lambda p,\lambda A+\mu(p\otimes p))=F(x,t,\lambda p\sigma(x),\lambda\;^t\sigma(x) A\sigma(x)+\mu(p\sigma(x)\otimes p\sigma(x)))\\
=\lambda F(x,t, p\sigma(x),\;^t\sigma(x) A\sigma(x))=\lambda G(x,t,p,A).
\end{array}$$
Thus (F2), (F3) hold true.
\end{rem}
We now recall the usual definition of viscosity solution for the level set equation (\ref{eqlevelset}).
\begin{defin}\label{defviscsol}
An upper (respectively lower) semicontinuous function $u:\RRn\times(0,+\infty)\to\R$ is a viscosity subsolution  (respectively supersolution) of (\ref{eqlevelset}) if and only if for any $\phi\in C^2(\RRn\times(0,+\infty))$, if $(x,t)\in\RRn\times(0,+\infty)$ is a local maximum (respectively minimum) point for $u-\phi$, we have
\begin{equation}\label{eqsubsol}\phi_t(x,t)+G_*\big(x,t,\nabla \phi(x,t),D^2\phi(x,t)\big)
\leq0,
\end{equation}
where $G$ is given in (\ref{eqopg}).
A viscosity solution of (\ref{eqlevelset}) is a continuous function $u:\RRn\times(0,+\infty)\to\R$ which is either a subsolution and a supersolution.
\end{defin}
\begin{rem}\label{remtech}
In the previous definition the lower semicontinuous extension of $G$ at the singular points where $p\sigma(x)=0$ is
$$\begin{array}{ll}
G_*(x,t,p,A)&\\
=\lim_{r\to0+}\inf\{G(y,s,q,B):(y,s,q,B)\in\hbox{dom }(G),\;|(x-y,t-s,p-q,A-B)|\leq r\}\\
=\lim_{r\to0+}\inf\{F(y,s,q\sigma(y),\;^t\sigma(y)B\sigma(y)):q\sigma(y)\neq0,
\quad|(x-y,t-s,p-q,A-B)|\leq r\}.
\end{array}$$
In particular from $p\sigma(x)=0$ we have
$$\begin{array}{ll}
F_*(x,t,0,\;^t\sigma(x)A\sigma(x))\leq G_*(x,t,p,A)
\leq G^*(x,t,p,A)\leq F^*(x,t,0,\;^t\sigma(x)A\sigma(x)).
\end{array}$$
Thus if $p\sigma(x)=0$ and $^t\sigma(x)A\sigma(x)={\mathbb O}$, then $G_*(x,t,p,A)=G^*(x,t,p,A)=0$, so a counterpart of (F1) holds for $G$.\\
In Definition \ref{defviscsol}, if $X\phi(x,t)\neq0$, then (\ref{eqsubsol}) is equivalently written as
\begin{equation}
\phi_t(x,t)+F\big(x,t,X\phi(x,t),X^2\phi(x,t)\big)\leq0,
\end{equation}
and the extended operator $G_*$ only appears when $X\phi(x,t)=0$.
Therefore at singular points the notion of viscosity subsolution is stronger than one would get requiring
\begin{equation}\label{eqfakesubsol}
\phi_t(x,t)+F_*\big(x,t,X\phi(x,t),X^2\phi(x,t)\big)
\leq0.
\end{equation}
instead of (\ref{eqsubsol}).
Notice that in the special case (\ref{mean curvature hamiltonian dim m}), if $p\sigma(x)=0$,
$$F_*(x,t,0,A)=\min_{|p|=1}\{-\hbox{tr }(I-p\otimes p)A\}
$$
and this is used in \cite{caci} or in \cite{didr} to define (weak-)subsolutions of the horizontal mean curvature flow equation, by requiring (\ref{eqfakesubsol}) instead of (\ref{eqsubsol}).
\end{rem}

\section{Viscosity solutions}

In this section we consider equation (\ref{eqlevelset}) and prove an equivalent definition of viscosity solution. This result extends \cite{bage} to our setting and simplifies the treatment of singularities of equation (\ref{eqlevelset}) by restricting the family of test functions at characteristic points.

When it will be necessary to emphasise the variable $x$ in which we are computing the vector fields $X_i$ (and with respect to we are computing the derivatives), we will denote the horizontal gradient and the horizontal Hessian matrix as $X_x$ and $X_x^2$ .  For example if $H(x,y)$ is a $C^2$ function defined in $\R^n\times\R^n$ and $(x_o,y_o)$ is a generic point of $\R^n\times\R^n$ we will denote with $X_xH(x_o,y_o)$ the horizontal gradient of $H$ with respect to the variable $x$ and with $X_yH(x_o,y_o)$ the horizontal gradient of $H$ with respect to $y$, both computed at the point $(x_o,y_o)$. Analogous definitions hold for $X^2_{x}H(x_o,y_o)$ and $X^2_{y}H(x_o,y_o)$.
We consider an homogeneous (with respect to any dilatation $\delta_\lambda$, $\lambda>0$) norm on $\R^n$,
\begin{equation}\label{norm}\norm{x}_{G}=[|x_h|^4+|x_v|^2]^{1/4},\end{equation}
and we define a left invariant metric $d_{G}:\R^n\times\R^n\to[0,+\infty)$ as
\begin{equation}\label{distance}
d_{G}(x,y)=\norm{x^{-1}\circ y}_G
=[\abs{y_h-x_h}^4+\abs{y_v-x_v-\langle Bx_h,y_h\rangle}^2]^{1/4}.
\end{equation}
\begin{rem}
Here we make some comments on the definitions (\ref{norm}) and (\ref{distance}). Dealing with fully nonlinear partial differential equations with singularities poses a number of additional difficulties. Viscosity solutions theory can cope with these difficulties since the work of Evans-Spruck \cite{es} and Chen-Giga-Goto \cite{cgg}. The horizontal mean curvature flow equation adds further difficulties since the singularity does not just appear when the gradient of the solution vanishes, but rather when the horizontal gradient vanishes, so when the gradient takes its values in a nontrivial subspace. In some key step of the proofs, the standard euclidian distance does not work and one has to think to something different. One natural choice would be to exchange the euclidian distance with the Carnot-Caratheodory distance. This distance is not smooth however being only locally H\"older continuous. Therefore, due to the nature of Carnot groups, one thinks of distance functions that are related to homogeneous norms, which are distance function equivalents to the euclidian one but are smooth. One well known example is the norm in (\ref{norm}). This one works well in step two groups, at least for the results we prove, but not for the comparison principle, one reason being that the group operation is not commutative and this makes the distance not symmetric. In groups of higher step, one has a natural homogeneous distance with more terms, making the computations in this section more complex. Moreover we are often using the structure (\ref{eqcarnotstr}), which is valid specifically in step two groups. There might be additional difficulties due to the fact that Carnot groups with step higher than two differ in some important geometric properties. Nonetheless step two groups already have important applications that make their study quite interesting as for instance in models of the visual cortex, see \cite{baci} and the references therein for details.
\end{rem}

We start proving a nice property of the homogeneous metric $d_G$ defined in (\ref{distance}).
 \begin{lem}\label{lemma:property norm in Carnot group2} Put $N(x)=\norm{x}_{G}^4$ for any $x\in\RR^{n}$. Then
 \begin{enumerate}[(i)]
\item $\{x\in \R^n:|XN(x)|=0\}=\{x\in \R^n:X^2N(x)=\mathbb{O}\}=\{x\in \R^n:x_h=0\}$.
\item  $|X_xd_{G}^4(x,y)|=|X_yd_{G}^4(x,y)|$ and $X_x^2d_{G}^4(x,y)=X_y^2d_{G}^4(x,y)$ for any $x,y\in \R^n$;
 moreover they all have as zero-set the set $\{(x,y)\in \R^n\times\R^n:x_h=y_h\}$.
\end{enumerate}
\end{lem}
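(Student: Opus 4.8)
The plan is to reduce everything to the single function $N$ and its left translates, and to exploit the explicit form (\ref{eqcarnotstr}) of $\sigma$ together with the skew-symmetry of the matrices $B^{(j)}$. Throughout I write $B(\zeta)=\sum_{j=1}^{n-m}\zeta_j B^{(j)}$ for $\zeta\in\R^{n-m}$, which is skew-symmetric since each $B^{(j)}$ is. A direct computation of $\nabla N$ and $D^2N$ for $N(x)=\abs{x_h}^4+\abs{x_v}^2$, followed by the contractions $XN=\nabla N\,\sigma$ and $X^2N={}^t\sigma\,D^2N\,\sigma$, should give the closed forms
\[ XN(x)={}^tx_h\big(4\abs{x_h}^2 I_m-2B(x_v)\big), \]
\[ X^2N(x)=4\abs{x_h}^2 I_m+8\,x_h\otimes x_h+2\sum_{j=1}^{n-m}(B^{(j)}x_h)\otimes(B^{(j)}x_h). \]
The point of recording these is that $XN$ is a fixed (skew-perturbed) matrix applied to $x_h$, while $X^2N$ is visibly a sum of three positive semidefinite matrices.

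For part (i), the inclusions $\supseteq$ are immediate since both expressions vanish when $x_h=0$. For the reverse, I would test $XN(x)=0$ against $x_h$: by skew-symmetry ${}^tx_h\,B(x_v)\,x_h=0$, so the pairing leaves $4\abs{x_h}^4=0$, forcing $x_h=0$. For $X^2N$, being a sum of positive semidefinite terms, its vanishing (equivalently the vanishing of its trace $4m\abs{x_h}^2+\dots$) already forces $x_h=0$ through the term $4\abs{x_h}^2 I_m$. This settles (i).

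For part (ii), the derivatives in $y$ are handled by left-invariance. Writing $d_{G}^4(x,y)=N(x^{-1}\circ y)=N(\tau_{x^{-1}}(y))$ and iterating the identity $X_j(\varphi\circ\tau_\alpha)=(X_j\varphi)\circ\tau_\alpha$, I get $X_y d_{G}^4(x,y)=(XN)(x^{-1}\circ y)$ and $X_y^2 d_{G}^4(x,y)=(X^2N)(x^{-1}\circ y)$, where the horizontal part of $x^{-1}\circ y$ is $\xi:=y_h-x_h$ and the vertical part is $w:=y_v-x_v-\langle Bx_h,y_h\rangle$; by (i) these vanish exactly when $\xi=0$, i.e. on $\{x_h=y_h\}$. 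The derivatives in $x$ carry no such left translation structure, so here I would differentiate $d_{G}^4$ in $x$ by hand. A direct computation gives $X_x d_{G}^4(x,y)=-{}^t\xi\big(4\abs{\xi}^2 I_m+2B(w)\big)$, which differs from $X_y d_{G}^4=(XN)(\xi,w)={}^t\xi\big(4\abs{\xi}^2 I_m-2B(w)\big)$ only by the sign of the skew part; squaring and using $B(w)^{T}=-B(w)$ collapses both norms to ${}^t\xi\,\big(16\abs{\xi}^4 I_m-4B(w)^2\big)\,\xi$, giving $\abs{X_x d_{G}^4}=\abs{X_y d_{G}^4}$. For the Hessian I would form $X_x^2 d_{G}^4={}^t\sigma(x)\,D_x^2 d_{G}^4\,\sigma(x)$ from the block decomposition of $D_x^2 d_{G}^4$: the pure $x_h$ block contributes $B^{(j)}y_h$ terms, the mixed blocks contribute cross terms in $B^{(j)}x_h$ and $B^{(j)}y_h$, and the whole sum reorganises as a perfect square $2\sum_j (B^{(j)}\xi)\otimes(B^{(j)}\xi)$ in $\xi=y_h-x_h$, reproducing exactly $(X^2N)(\xi,w)=X_y^2 d_{G}^4$.

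The main obstacle is precisely this last computation: unlike the $y$-derivatives, the $x$-derivatives enjoy no left-invariance, and the intermediate expressions genuinely involve $x_h$, $y_h$ and $w$ separately, through the cross-derivatives of the vertical term $\langle Bx_h,y_h\rangle$. The content of (ii) is that these apparently asymmetric contributions telescope --- via the skew-symmetry of the $B^{(j)}$ and the identity $B^{(j)}y_h-B^{(j)}x_h=B^{(j)}\xi$ --- into the same $\xi$-dependent quantities produced by the left-invariant $y$-calculation, so that the equalities of norms and of Hessians hold and their common zero-set is $\{x_h=y_h\}$.
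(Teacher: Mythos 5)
Your proposal is correct in substance and in all the formulas it actually writes down, and part (i) matches the paper's computation (you test $XN(x)=0$ against $x_h$ and use positive semidefiniteness of the three summands of $X^2N$, where the paper computes $|XN|^2$ in full and reads off the vanishing set of $X^2N$ from its formula --- a cosmetic difference). The genuine divergence is in part (ii). For the $x$-derivatives you assert that ``the derivatives in $x$ carry no left translation structure'' and therefore launch a by-hand block computation of $D^2_x d_G^4$ contracted with $\sigma(x)$. The paper avoids this entirely with one observation you missed: since $N(x^{-1})=N(-x)=N(x)$, one has $d_G^4(x,y)=N(x^{-1}\circ y)=N(y^{-1}\circ x)$, so as a function of $x$ the quantity $d_G^4(x,y)$ \emph{is} a left translate of $N$ (by $y^{-1}$), and left-invariance gives $X_xd_G^4(x,y)=(XN)(y^{-1}\circ x)$ and $X_x^2d_G^4(x,y)=(X^2N)(y^{-1}\circ x)$ for free; the equalities in (ii) then follow because $|XN|^2$ and $X^2N$ are even in $x$ (both depend only quadratically, respectively quartically, on the relevant data after the skew-symmetry cancellation), and $y^{-1}\circ x=-(x^{-1}\circ y)$. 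Your route does reach the same endpoint --- your formula $X_xd_G^4=-{}^t\xi\bigl(4|\xi|^2I_m+2B(w)\bigr)$ is correct and the norm comparison via $M\,{}^tM=16|\xi|^4I_m-4B(w)^2$ is sound --- but the Hessian step, which is exactly where the asymmetric cross terms in $x_h$, $y_h$, $w$ must telescope into $2\sum_j(B^{(j)}\xi)\otimes(B^{(j)}\xi)$, is only announced (``the whole sum reorganises as a perfect square'') and not carried out; that is the one place where your argument is a plan rather than a proof, and it is precisely the computation the paper's symmetry trick renders unnecessary. If you keep your route, write out that block computation explicitly; otherwise, insert the one-line identity $N(z)=N(z^{-1})$ and inherit everything from part (i).
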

\begin{proof}(i) The proof of the first point follows by some simple computations. In fact since
$$XN(x)=4\abs{x_h}^2x_h+2\sum_{k=1}^{n-m}{(x_v)}_kB^{(k)}x_h,$$
we have, here notice that, since the matrices $B^{(k)}$ are all skew symmetric the mixed products are all null,
$$|XN(x)|^2=16|x_h|^6+4\left|\sum_{k=1}^{n-m}{(x_v)}_kB^{(k)}x_h\right|^2=16|x_h|^6+4\sum_{k,l=1}^{n-m}(x_v)_k(x_v)_l\langle B^{(k)}x_h,B^{(l)}x_h\rangle.$$
Thus $XN(x)=0$ if and only if $x_h=0$. Moreover
$$X^2N(x)=4|x_h|^2I_m+8x_h\otimes x_h+2\sum_{k=1}^{n-m}B^{(k)} x_h\otimes B^{(k)}x_h,$$
which is null at $x_h=0$.

\noindent
(ii) First of all we observe that, since the vector fields $X_i$ are invariant by left composition of the group operation, we have
\begin{align}\label{eqdistnorm}
X_yd_{G}^4(x,y)&=X_yN(x^{-1}\circ y)=(XN)(x^{-1}\circ y)\\
X_y^2d_{G}^4(x,y)&=(X^2N)(x^{-1}\circ y)
\end{align}
and so by point (i) $X_yd_{G}^4(x,y)$ and $X^2_yd_{G}^4(x,y)$ are null if and only if $(x^{-1}\circ y)_h=0$, i.e. $y_h=x_h$. To compute the horizontal gradient and the horizontal Hessian matrix with respect the $x$ variable we observe that, since $N(x^{-1})=N(-x)=N(x)$, it holds $d_{G}^4(x,y)=N(x^{-1}\circ y)=N(y^{-1}\circ x)$ and, by left invariance of the vector fields,
$$X_xd_{G}^4(x,y)=(XN)(y^{-1}\circ x),\quad
X_x^2d_{G}^4(x,y)=(X^2N)(y^{-1}\circ x).$$
Again $X_xd_{G}^4(x,y)$ and $X^2_xd_{G}^4(x,y)$ are null exactly when $y_h=x_h$.

Finally we observe that $\abs{X_yd_{G}^4(x,y)}^2=\abs{X_xd_{G}^4(x,y)}^2$ and $X^2_yd_{G}^4(x,y)=X^2_xd_{G}^4(x,y)$.
 \end{proof}
We use the previous Lemma to prove an equivalent definition of solution other than Definition \ref{defviscsol} which is the usual definition of {viscosity solution} for the equation (\ref{eqlevelset}). The definition will only change at singular points of the differential operator.

\begin{thm}\label{thmbg} An upper (respectively lower) semicontinuous function $u$ is a viscosity subsolution  (respectively supersolution) of (\ref{eqlevelset}) if and only if for any $\phi\in C^2(\RRn\times(0,+\infty))$, if $(x,t)\in\RRn\times(0,+\infty)$ is a local maximum (respectively minimum) point for $u-\phi$, one has
\begin{equation}\label{condition1}\frac{\partial\phi(x,t)}{\partial t}+F(x,t,X\phi(x,t),X^2\phi(x,t))
\leq0\quad\mbox{if }X\phi(x,t)\neq0 \end{equation}
and
\begin{equation}\label{condition2}\frac{\partial\phi(x,t)}{\partial t}\leq0\quad\mbox{if}\quad X\phi(x,t)=0\;\mbox{and}\;\mbox X^2\phi(x,t)=0,\end{equation}
(respectively
$$\frac{\partial\phi(x,t)}{\partial t}+F(x,t,X\phi(x,t),X^2\phi(x,t)
\geq0\quad\mbox{if }X\phi(x,t)\neq0$$
and
\begin{equation}\label{condition2 supersol}
\frac{\partial\phi(x,t)}{\partial t}\geq0\quad\mbox{if}\quad X\phi(x,t)=0\;\mbox{and}\;\mbox X^2\phi(x,t)=0).
\end{equation}
\end{thm}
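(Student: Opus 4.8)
Since the statement is an equivalence, the plan is to treat the two implications separately, the first being immediate. For the direction ``viscosity (Definition \ref{defviscsol}) $\Rightarrow$ restricted'', let $u$ be a subsolution in the sense of Definition \ref{defviscsol}, let $\phi\in C^2$ and let $(x,t)$ be a local maximum of $u-\phi$, so that \eqref{eqsubsol} holds. If $X\phi(x,t)\neq0$ the point is regular, $G_*(x,t,\nabla\phi,D^2\phi)=F(x,t,X\phi,X^2\phi)$, and \eqref{eqsubsol} is exactly \eqref{condition1}. If instead $X\phi(x,t)=0$ and $X^2\phi(x,t)=\mathbb O$, then Remark \ref{remtech} gives $G_*(x,t,\nabla\phi,D^2\phi)=0$, so \eqref{eqsubsol} reduces to $\phi_t(x,t)\le0$, which is \eqref{condition2}; in all remaining configurations ($X\phi=0$, $X^2\phi\neq\mathbb O$) the restricted definition requires nothing. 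The supersolution case is symmetric.

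For the converse I would assume \eqref{condition1}--\eqref{condition2} and recover \eqref{eqsubsol} at a local maximum $(\hat x,\hat t)$ of $u-\phi$. The only nontrivial case is $X\phi(\hat x,\hat t)=0$ with $A_0:=X^2\phi(\hat x,\hat t)\neq\mathbb O$, where I must prove $\phi_t(\hat x,\hat t)+G_*(\hat x,\hat t,\nabla\phi,D^2\phi)\le0$. I would first reduce to a \emph{strict} maximum by adding to $\phi$ a term $\eta\,(d_{G}^4(\hat x,\cdot)+(t-\hat t)^2)$: by Lemma \ref{lemma:property norm in Carnot group2} the homogeneous part has vanishing horizontal gradient and Hessian at $\hat x$, so this perturbation leaves $X\phi$, $X^2\phi$ and $\phi_t$ at $(\hat x,\hat t)$ unchanged while making the maximum strict.

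The heart of the argument is a doubling of variables in which the Euclidean penalization is replaced by the homogeneous metric. I would maximize
$$\Phi_\varepsilon(x,y,t)=u(x,t)-\phi(y,t)-\frac{1}{\varepsilon}\,d_{G}^4(y,x)$$
over a small neighbourhood, obtaining maximizers $(x_\varepsilon,y_\varepsilon,t_\varepsilon)\to(\hat x,\hat x,\hat t)$ with $\frac{1}{\varepsilon}d_{G}^4(y_\varepsilon,x_\varepsilon)\to0$. The test function for $u$ in the variable $x$ is $\frac{1}{\varepsilon}d_{G}^4(y_\varepsilon,\cdot)$ plus a function of $t$ alone, so its horizontal gradient $q_\varepsilon$ and horizontal Hessian at $x_\varepsilon$ are exactly those of the penalization. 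Here Lemma \ref{lemma:property norm in Carnot group2} does the decisive work: its zero-set statement yields the \emph{dichotomy} that $q_\varepsilon$ vanishes if and only if the horizontal Hessian vanishes, precisely when $(x_\varepsilon)_h=(y_\varepsilon)_h$. Thus the restricted definition is always applicable: where $q_\varepsilon\neq0$ I invoke \eqref{condition1}, and where $q_\varepsilon=0$ the horizontal Hessian is simultaneously $\mathbb O$ and I invoke \eqref{condition2}, getting $\phi_t(y_\varepsilon,t_\varepsilon)\le0$. Differentiating $\Phi_\varepsilon$ in $y$ and using the equality of the two horizontal gradient norms in Lemma \ref{lemma:property norm in Carnot group2}(ii) gives $|q_\varepsilon|=|X_y\phi(y_\varepsilon,t_\varepsilon)|\to|X\phi(\hat x,\hat t)|=0$, so in the first alternative the test gradient is small and nonzero.

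Finally I would pass to the limit. Because the penalization is homogeneous of degree four its horizontal Hessian degenerates along the diagonal, which is the content of $X_x^2 d_{G}^4=X_y^2 d_{G}^4$ in Lemma \ref{lemma:property norm in Carnot group2}(ii); through the parabolic theorem on sums this lets me replace the blowing-up test Hessian $\frac{1}{\varepsilon}X_x^2 d_{G}^4(y_\varepsilon,x_\varepsilon)$ by a symmetric matrix in the horizontal superjet of $u$ bounded \emph{above} by $A_0+o(1)$. Ellipticity (F2) converts this upper bound into a lower bound for $F$, and since $q_\varepsilon\to0$ with $q_\varepsilon\neq0$, the relaxed lower limit defining $G_*$ together with the homogeneity (F3) yields $\phi_t(\hat x,\hat t)+G_*(\hat x,\hat t,\nabla\phi,D^2\phi)\le0$. \textbf{The crux is exactly this passage to the limit.} Naively the test Hessian diverges, and it is the special zero-set and symmetry properties of the homogeneous norm recorded in Lemma \ref{lemma:property norm in Carnot group2} — which fail for the Euclidean distance and for the non-smooth Carnot--Carath\'eodory distance — that at once keep the horizontal gradient controlled, force gradient and Hessian to vanish together, and make the theorem on sums deliver the bound by $A_0$. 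The most delicate configuration is when the horizontal components of the maximizers coincide, so that the first alternative is unavailable: there the penalization's horizontal gradient and Hessian vanish identically, and extra care, again resting on Lemma \ref{lemma:property norm in Carnot group2}, is required to recover the genuine singular value $G_*$ rather than merely $\phi_t(\hat x,\hat t)\le0$.
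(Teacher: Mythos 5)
Your easy direction, the doubling of variables with the penalization $d_G^4/\varepsilon$, and the dichotomy supplied by Lemma \ref{lemma:property norm in Carnot group2} (the horizontal gradient and Hessian of the penalization vanish together, exactly when $(x_\varepsilon)_h=(y_\varepsilon)_h$) all match the paper. The gap is in the case $X\phi(y_\varepsilon,t_\varepsilon)\neq0$, which is the heart of the proof, and it is twofold. First, your appeal to the parabolic theorem on sums is circular: the hypothesis on $u$ is only the restricted test--function conditions \eqref{condition1}--\eqref{condition2}, i.e.\ an inequality at local maxima of $u-\varphi$ for genuine $C^2$ functions $\varphi$; the subsolution inequality for elements of the closed superjet $\bar J^{2,+}u$ produced by the theorem on sums is equivalent to the \emph{full} Definition \ref{defviscsol}, which is precisely what you are trying to establish. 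You are not entitled to evaluate your hypothesis on jet elements that are not realized by admissible test functions. Second, even formally, the theorem on sums bounds the superjet matrix by (a regularization of) the Hessian of the penalization $\tfrac1\varepsilon D^2 d_G^4$, not by $D^2\phi$; the claim that this ``delivers the bound by $A_0=X^2\phi(\hat x,\hat t)$'' is exactly the step you do not carry out, and it is not clear it can be carried out for a quartic, non-translation-invariant penalization.

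The paper avoids all of this with a right-translation trick you do not have: since $d_G^4(x,x\circ\alpha)=N(\alpha)$ is \emph{constant in $x$}, restricting $\psi_\varepsilon$ to the slice $y=x\circ x_\varepsilon^{-1}\circ y_\varepsilon$ makes the penalization a constant, so $(x_\varepsilon,t_\varepsilon)$ is a local maximum of $u-\varphi$ for the bona fide $C^2$ test function $\varphi(x,t)=\phi(x\circ x_\varepsilon^{-1}\circ y_\varepsilon,t)+\mathrm{const}$. A chain-rule computation with the Jacobian of the right translation gives $X\varphi(x_\varepsilon,t_\varepsilon)={}^t\sigma(2x_\varepsilon-y_\varepsilon)\nabla\phi(y_\varepsilon,t_\varepsilon)$, which is shown to be nonzero via the first-order condition in $y$ and Lemma \ref{lemma:property norm in Carnot group2}, and $X^2\varphi(x_\varepsilon,t_\varepsilon)={}^t\sigma(2x_\varepsilon-y_\varepsilon)D^2\phi(y_\varepsilon,t_\varepsilon)\sigma(2x_\varepsilon-y_\varepsilon)\to X^2\phi(\hat x,\hat t)$: no blow-up, \eqref{condition1} applies directly, and the limit follows from the lower semicontinuity of $G_*$. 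Finally, the configuration you flag at the end as needing ``extra care'' (when $(x_\varepsilon)_h=(y_\varepsilon)_h$ and you only get $\partial_t\phi(y_\varepsilon,t_\varepsilon)\le0$) is resolved in the paper by ellipticity, not left open: the maximum in $y$ gives $X^2\phi(y_\varepsilon,t_\varepsilon)\geq-\tfrac1\varepsilon X^2_y d_G^4(x_\varepsilon,y_\varepsilon)=\mathbb O$, whence $G_*(y_\varepsilon,t_\varepsilon,\nabla\phi,D^2\phi)\leq F^*(y_\varepsilon,t_\varepsilon,0,X^2\phi(y_\varepsilon,t_\varepsilon))\leq F^*(y_\varepsilon,t_\varepsilon,0,\mathbb O)=0$, so the full inequality \eqref{thesis} survives the passage to the limit. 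You should supply both of these arguments to close the proof.
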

\begin{proof} We only show the result for subsolutions the other part being similar. It is clear that a viscosity subsolution will satisfy (\ref{condition2}) since $G_*(x,t,p,\mathbb{O})=0$ if $p\sigma(x)=0$ by Remark \ref{remtech} and (F1).
 
Let $u$ be an upper semicontinuous function which satisfies (\ref{condition1}) and (\ref{condition2}). Consider $\phi\in C^2(\RRn\times(0,+\infty))$ and $(\hat x,\hat t)\in\RRn\times(0,+\infty)$ a local maximum point for  $u-\phi$ such that $X\phi(\hat x,\hat t)=0$ and $X^2\phi(\hat x,\hat t)\neq0$. Without loss of generality we can assume that $u$ is a strict local maximum point for $u-\phi$. We need to prove that
\begin{equation}\label{thesis}
\frac{\partial\phi(\hat x,\hat t)}{\partial t}+G_*(x,t,\nabla\phi(\hat x,\hat t),D^2\phi(\hat x,\hat t))\leq0.
\end{equation}
For any $\gep>0$ we consider the function
$$\psi_\gep(x,y,t)=u(x,t)-\frac{d_{G}^4(x,y)}{\gep}-\phi(y,t).$$
By standard arguments one proves that for $\gep$ sufficiently small there is a family of local maxima $(x_\gep,y_\gep,t_\gep)$ of $\psi_\gep$ such that
$(x_\gep,y_\gep,t_\gep)$ converges to $(\hat x,\hat x,\hat t)$. 
Indeed, if $(x_\gep,y_\gep,t_\gep)$ are the maximum points of $\psi_\gep$ in a small compact neighborhood of $(\hat x,\hat x,\hat t)$, $(x_\gep,y_\gep,t_\gep)$ will converge to some $(\bar x,\bar y,\bar t)$ (passing to a subsequence if necessary). One first uses $\psi(x_\gep,y_\gep,t_\gep)\geq\psi(\hat x,\hat x, \hat t)$ to show that $\bar x=\bar y$ and next by taking the limit that $(\bar x,\bar t)$ is a maximum of $u-\phi$ in the neighborhood so that $(\bar x,\bar t)=(\hat x,\hat t)$.

Moreover since the function $y\mapsto\psi_\gep(x_\gep,y,t_\gep)$ has a local maximum in $y_\gep$ we have
$$\nabla \phi(y_\gep,t_\gep)=-\frac{D_yd_{G}^4(x_\gep,y_\gep)}{\gep},\quad
D^2\phi(y_\gep,t_\gep)\geq-\frac{D^2_yd_{G}^4(x_\gep,y_\gep)}{\gep}.$$
Thus
\begin{equation}\label{max consequence1}
X\phi(y_\gep,t_\gep)=-\frac{X_yd_{G}^4(x_\gep,y_\gep)}{\gep},\quad X^2\phi(y_\gep,t_\gep)\geq-\frac{X^2_yd_{G}^4(x_\gep,y_\gep)}{\gep},
\end{equation}
Two cases may now occur.

 1. $X\phi(y_\gep,t_\gep)=0$ along a subsequence. This means that $X_yd_{G}^4(x_\gep,y_\gep)=0$ and by Lemma \ref{lemma:property norm in Carnot group2}, $(x_\gep)_h=(y_\gep)_h$. Since the map $(x,t)\mapsto u(x,t)- \varphi(x,t)$, with $\varphi(x,t)=\frac{d^4_{G}(x,y_\gep)}{\gep}+\phi(y_\gep,t)$ attains a maximum at $(x_\gep,t_\gep)$ and
 $$X\varphi(x,t)=0\Leftrightarrow (x_\gep)_h=(y_\gep)_h\Leftrightarrow X^2\varphi(x,t)=0,$$
  by (\ref{condition2}) we get
    $$\frac{\partial\varphi}{\partial t}(x_\gep,t_\gep)=\partial_t\phi(y_\gep,t_\gep)\leq0.$$
    For future reference we remark that the test function $\varphi$ satisfies in a neighborhood of $(\hat x,\hat t)$: $X\varphi=0$ implies $X^2\varphi=0$.
    We proceed and by (\ref{max consequence1}) and $(x_\gep)_h=(y_\gep)_h$,
    we get that $X^2\phi(y_\gep,t_\gep)\geq\mathbb{O}$. Using the ellipticity of $F$ and Remark \ref{remtech}, it holds
    $$\begin{array}{l}
    \partial_t\phi(y_\gep,t_\gep)+G_*(y_\gep,t_\gep,{\nabla\phi(y_\gep,t_\gep)},D^2\phi(y_\gep,t_\gep))\leq\partial_t\phi(y_\gep,t_\gep)+F^*(y_\gep,t_\gep,X\phi(y_\gep,t_\gep),X^2\phi(y_\gep,t_\gep))\\
\leq\partial_t\phi(y_\gep,t_\gep)+F^*(y_\gep,t_\gep,0,\mathbb{O}_{m\times m})
    =\partial_t\phi(y_\gep,t_\gep)\leq0
    \end{array}$$
    and we conclude by letting $\gep$ go to 0.

2. $X\phi(y_\gep,t_\gep)\neq0$ for all $\gep$ sufficiently small. Using (\ref{max consequence1}) and the previous Lemma this means $(y_\gep)_h\neq (x_\gep)_h$.  Moreover the point $(x_\gep,t_\gep)$ is a maximum for 
$$\begin{array}{ll}
(x,t)\mapsto\psi_\gep(x,x\circ x_\gep^{-1}\circ y_\gep,t)&\displaystyle=u(x,t)-\frac{d^4_{G}(x_\gep,y_\gep)}{\gep}-\phi(x\circ x_\gep^{-1}\circ y_\gep,t)\\
&\displaystyle=:u(x,t)-\varphi(x,t),
\end{array}$$
since $d_G^4(x,x\circ x^{-1}_\gep\circ y_\gep)=N(x^{-1}_\gep\circ y_\gep)=d^4_G(x_\gep,y_\gep)$.
Let  $\tilde \tau_\alpha(x)=x\circ\alpha$ be the right translation by $\alpha$ and $D{\tilde\tau_\alpha}(x)\equiv D{\tilde\tau_\alpha}$ its Jacobian matrix. A simple computation shows that $D{\tilde\tau_\alpha}$ has the form
\begin{align*}D{\tilde\tau_\alpha}&=
\left(\begin{array}{c|c}
\mathbb{I}_m&\mathbb{O}_{m\times n}\\ \hline
\\
\;^t((\;^tB^{(1)})\alpha_h)&\\
\vdots&\mathbb{I}_n\\
\;^t((\;^tB^{(n-m)})\alpha_h)&\end{array}\right)=\left(\begin{array}{c|c}
\mathbb{I}_m&\mathbb{O}_{m\times n}\\ \hline
\\
\;^t(-B^{(1)}\alpha_h)&\\
\vdots&\mathbb{I}_n\\
\;^t(-B^{(n-m)}\alpha_h)&\end{array}\right)\\\\
&=\left(\begin{array}{c|c}
\mathbb{I}_m&\mathbb{O}_{m\times n}\\ \hline
\\
-\;^tB\alpha_h&\mathbb{I}_n,
\end{array}\right).\end{align*}
By the chain rule we get
$$\begin{array}{ll}
X\varphi(x_\gep,t_\gep)&=\;^t\sigma(x_\gep)\;^tD{\tilde\tau_{x_\gep^{-1}\circ y_\gep}}\nabla \phi(\tilde\tau_{x_\gep^{-1}\circ y_\gep}(x_\gep),t_\gep)
=\;^t\big(D{\tilde\tau_{x_\gep^{-1}\circ y_\gep}}\sigma(x_\gep)\big)\nabla \phi(y_\gep,t_\gep)\\
&=\;^t\sigma(2x_\gep-y_\gep)\nabla \phi(y_\gep,t_\gep)
\longrightarrow X\phi(\hat x,\hat t)=0,\quad\mbox{as }\gep\to0,\end{array}$$
since $(x_\gep)_h-(x_\gep^{-1}\circ y_\gep)_h=(x_\gep\circ y_\gep^{-1}\circ x_\gep)_h=(2x_\gep-y_\gep)_h$, and
$$\begin{array}{ll}
X^2\varphi(x_\gep,t_\gep)&=\;^t\sigma(x_\gep)\;^tD{\tilde\tau_{x_\gep^{-1}\circ y_\gep}}D^2\phi(\tilde\tau_{x_\gep^{-1}\circ y_\gep}(x_\gep),t_\gep)D{\tilde\tau_{x_\gep^{-1}\circ y_\gep}}\sigma(x_\gep)\\
&=\;^t\sigma(2x_\gep-y_\gep)D^2\phi(y_\gep,t_\gep)\sigma(2x_\gep-y_\gep)
\longrightarrow X^2\phi(\hat x,\hat t)\neq0,\quad\mbox{as }\gep\to0.\end{array}$$
Moreover we show that $X\varphi(x_\gep,t_\gep)\neq0$. In fact, as $u(x_\gep,t)-\frac1\gep d_G^4(x_\gep,y)-\phi(y,t)$ has a maximum at $(y,t)=(y_\gep,t_\gep)$,
$$\begin{array}{ll}
X\varphi(x_\gep,t_\gep)&\displaystyle=\;^t\sigma(2x_\gep-y_\gep)\nabla \phi(y_\gep,t_\gep)=-\gep^{-1}\;^t\sigma(2x_\gep-y_\gep)\nabla_yd_{G}^4(x_\gep,y_\gep)\\
&\displaystyle=-\gep^{-1}\;^t\sigma(2x_\gep-y_\gep)\;^tD{\tau_{x_\gep^{-1}}}\nabla N(x_\gep^{-1}\circ y_\gep)
=-\gep^{-1}\;^t\sigma(x_\gep-y_\gep)\nabla N(x_\gep^{-1}\circ y_\gep)\\
&\displaystyle=\gep^{-1}\;^t\sigma(x_\gep-y_\gep)\nabla N(y_\gep^{-1}\circ x_\gep)
=\gep^{-1}XN(y_\gep^{-1}\circ x_\gep).
\end{array}$$
By the previous Lemma \ref{lemma:property norm in Carnot group2} this is null if and only if $(y_\gep)_h=(x_\gep)_h$ and we already know that this cannot be true. Thus by (\ref{eqsubsol}) it holds
$$\frac{\partial\varphi}{\partial t}(x_\gep,t_\gep)+G(x_\gep,t_\gep,\nabla \varphi(x_\gep,t_\gep),D^2\varphi(x_\gep,t_\gep))\leq0$$
and we conclude by letting $\gep\to0$,
$$\begin{array}{ll}
0&\displaystyle\geq\liminf_{\gep\to0}\Big(\frac{\partial\varphi}{\partial t}(x_\gep,t_\gep)+G(x_\gep,t_\gep,\nabla\varphi(x_\gep,t_\gep),D^2\varphi(x_\gep,t_\gep))\Big)\\
&\displaystyle\geq\partial_t\phi(\hat x,\hat t)+G_*(\hat x,\hat t,\nabla\phi(\hat x,\hat t),D^2\phi(\hat x,\hat t)).\end{array}$$
\end{proof}

\begin{rem}\label{corol:BG in Carnot group2} By a remark during the previous proof,
it is not restrictive to assume in Definition \ref{defviscsol} that, if $u$ (respectively $v$) is an upper semicontinuous subsolution (respectively a lower semicontinuous supersolution) of equation (\ref{eqlevelset}) and $\varphi\in C^2(\RRn\times(0,+\infty))$ is a test function for $u$ (resp. for $v$) at the point $(x,t)$, then at any point $(y,s)$ in a neighborhood of $(x,t)$ such that
$$X\varphi(y,s)=X\varphi(x,t)=0,$$
it holds
$$X^2\varphi(y,s)=0.$$
\end{rem}
Complementing Theorem \ref{thmbg} and Remark \ref{remtech}, we obtain the following consequence. It shows, in particular that the notion of solution for the horizontal mean curvature flow equation used in \cite{caci} or in \cite{didr}, which is different from viscosity solutions at characteristic points, is in fact equivalent to standard viscosity solutions and ours.
\begin{cor}\label{corequiv}
Let $u:\R^n\times(0,+\infty)\to\R$ be an upper (respectively lower) semicontinuous function. Function $u$ is a viscosity subsolution  (resp. supersolution) of (\ref{eqlevelset}) if and only if for any $\phi\in C^2(\RRn\times(0,+\infty))$, if $(x,t)\in\RRn\times(0,+\infty)$ is a local maximum (respectively minimum) point for $u-\phi$, one has
\begin{equation}\label{condcaci}
{\partial_t\phi(x,t)}+F_*(x,t,X\phi(x,t),X^2\phi(x,t))
\leq0\end{equation}
(resp.
$${\partial_t\phi(x,t)}+F^*(x,t,X\phi(x,t),X^2\phi(x,t)
\geq0.)$$
\end{cor}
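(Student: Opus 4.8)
The plan is to establish the two implications of the stated equivalence by playing Theorem \ref{thmbg} against the chain of inequalities recorded in Remark \ref{remtech}. Throughout I identify a test function $\phi$ at a point $(x,t)$ with the data $p=\nabla\phi(x,t)$ and $A=D^2\phi(x,t)$, so that $p\sigma(x)=X\phi(x,t)$ and $\;^t\sigma(x)A\sigma(x)=X^2\phi(x,t)$; this is exactly the correspondence through which the operator $G$ in (\ref{eqopg}) is built from $F$. The whole argument turns on two facts already in hand: that $F$ is continuous away from its singular set, so $F_*=F$ whenever $X\phi(x,t)\neq0$; and that $F_*(x,t,0,\mathbb{O})=0$ by (F1) together with the geometric property.

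For the forward implication I assume $u$ is a viscosity subsolution of (\ref{eqlevelset}), so (\ref{eqsubsol}) holds at every local maximum point $(x,t)$ of $u-\phi$. If $X\phi(x,t)\neq0$ then $(x,t,p,A)$ lies in the domain of $G$, where $G_*=G=F(x,t,X\phi,X^2\phi)=F_*(x,t,X\phi,X^2\phi)$, and (\ref{condcaci}) is immediate. If instead $X\phi(x,t)=0$, then $p\sigma(x)=0$ and the lower bound in Remark \ref{remtech} gives $F_*(x,t,0,X^2\phi)\leq G_*(x,t,\nabla\phi,D^2\phi)$; since $F_*(x,t,X\phi,X^2\phi)=F_*(x,t,0,X^2\phi)$ when $X\phi=0$, adding $\phi_t$ and using (\ref{eqsubsol}) yields (\ref{condcaci}). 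This covers uniformly the genuinely singular regime $X\phi(x,t)=0$, $X^2\phi(x,t)\neq0$, with no extra work.

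For the converse I assume (\ref{condcaci}) holds at every local maximum point and verify the two conditions of Theorem \ref{thmbg}. When $X\phi(x,t)\neq0$ we again have $F_*=F$, so (\ref{condcaci}) is precisely (\ref{condition1}). When $X\phi(x,t)=0$ and $X^2\phi(x,t)=0$, the term $F_*(x,t,X\phi,X^2\phi)=F_*(x,t,0,\mathbb{O})$ vanishes by (F1), so (\ref{condcaci}) collapses to $\phi_t(x,t)\leq0$, which is (\ref{condition2}). Theorem \ref{thmbg} then certifies that $u$ is a subsolution. The supersolution statement is identical after replacing $F_*$, $G_*$ and the inequalities by $F^*$, $G^*$ and their reverses.

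The point worth flagging is that this is genuinely a \emph{corollary}: there is no new hard analysis, because all of the delicate doubling-of-variables work with the homogeneous metric $d_G$ has already been spent in proving Theorem \ref{thmbg}. The only subtlety is conceptual rather than technical—in the converse direction one invokes (\ref{condcaci}) solely in the two regimes $X\phi\neq0$ and $(X\phi,X^2\phi)=(0,\mathbb{O})$ that Theorem \ref{thmbg} actually tests, and it is precisely that theorem which guarantees that the intermediate singular regime $X\phi=0$, $X^2\phi\neq0$ need not be checked by hand. Hence the main thing to get right is the bookkeeping of the three cases together with the observation that $F_*$ agrees with $F$ off the singular set.
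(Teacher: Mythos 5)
Your proof is correct and follows essentially the same route as the paper's: the forward implication via the inequality $F_*(x,t,0,\;^t\sigma(x)A\sigma(x))\leq G_*(x,t,p,A)$ from Remark \ref{remtech}, and the converse by checking only the two regimes (\ref{condition1}) and (\ref{condition2}) that Theorem \ref{thmbg} requires, using $F_*(x,t,0,\mathbb O)=0$. The case bookkeeping you flag at the end is exactly the content of the paper's (terser) argument.
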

\begin{proof}
Suppose that $(x,t)\in\RRn\times(0,+\infty)$ is a local maximum point for $u-\phi$. If $\nabla u(x,t)\neq0$ then $G(x,t,\nabla u(x,t),D^2u(x,t))=F(x,t,Xu(x,t),X^2(x,t))$ so there is nothing to prove.
We therefore limit ourselves to discuss the case $Xu(x,t)=0$.

If $u$ is a viscosity subsolution, by Remark \ref{remtech} we know that $F_*\leq G_*$, therefore (\ref{condcaci}) is satisfied.

If instead we suppose that (\ref{condcaci}) holds true, then by Theorem \ref{thmbg} we limit ourselves to test functions $\phi$ that satisfy: $X\phi(x,t)=0$ implies $X^2\phi(x,t)=\mathbb O$. In this case $F_*(x,t,X\phi(x,t),X^2\phi(x,t))=0$ and then $\partial_t \phi(x,t)\leq0$. Thus by Theorem \ref{thmbg} we know that $u$ is a viscosity subsolution.
\end{proof}
\begin{rem}
In \cite{baci} the authors require a subsolution $u$ of the horizontal mean curvature flow equation to satisfy
$$\partial_t\phi(x,t)-\mbox{Tr}X^2\phi(x,t)\leq0,$$
if $u-\phi$ has a maximum at $(x,t)$ and $X\phi(x,t)=0$.
If in particuar $\phi$ is in the class of test functions such that $X\phi(x,t)=0$ implies $X^2\phi(x,t)=0$, then $\partial_t\phi(x,t)\leq0$. Therefore $u$ is a subsolution in the sense of Theorem \ref{thmbg} and then it is a viscosity subsolution of (\ref{eqlevelset}).
\end{rem}

\section{Examples of explicit super or subsolutions}

In this section we present examples of super and subsolutions of the geometric equation in the case of the horizontal mean curvature flow equation (mcfe) when $F$ is given in (\ref{mean curvature hamiltonian dim m}).
From Theorem \ref{thmbg} we see that when we deal with functions with separated variables like $u(x,t)=\phi(t)+U(x)$ it it easy to check the (mcfe) at singular points of the operator. If $u-\varphi$ has a maximum/minimum at $(x_o,t_{o})$ and $X\varphi(x_o,t_{o})=0$, then we only need to look at the sign of $\varphi_t(x_o,t_{o})$ provided suitable test functions exist, i.e. $X^2\varphi(x_o,t_o)=\mathbb{O}$, otherwise we have nothing to check.
We start with a general result in step two Carnot groups, based on the definition of convex functions in the group. The definition of $v-$convex function (as in viscosity-convex) is given in Bardi-Dragoni \cite{badr}, where it is discussed and characterised, and the reader can find explicit examples.
\begin{defin}
A continuous function $U:\R^n\to\R$ is $v-$convex in the Carnot group if there is $\alpha\geq0$, and for all test functions $\phi\in C^2$ such that $U-\phi$ has a maximum at $x_o$, then $X^2\phi(x_o,t_o)\geq\alpha I$. If $\alpha>0$ we say that $U$ is strictly v-convex.
\end{defin}
The idea is to build supersolutions of the (mcfe) from a $v-$convex function.
\begin{prop}\label{propsupsol}
In a Carnot group of step 2, let $U\in C(\Omega)$ be continuous and a strictly $v-$convex function. Then for $c,r\in\R$, the function $u(x,t)=ct-U(x)+r$ is a supersolution of (mcfe) for all $c\geq-(m-1)\alpha$, $r\in\R$. 
Suppose moreover that $U$ is nonnegative. Then if $c=-(m-1)\alpha$ and $r>0$, the initial front $\{x:u(x,0)=0\}=\{x:U(x)=r\}$ becomes extinct before time $\bar t= r/({(m-1)\alpha})$.
\end{prop}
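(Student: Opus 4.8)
The plan is to verify the supersolution property directly through the simplified criterion of Theorem \ref{thmbg}, so that the singular extension $G_*$ never has to be evaluated, and then to deduce extinction by comparing $u$ with the geometric flow. First I would fix a test function $\phi\in C^2(\R^n\times(0,+\infty))$ and a local minimum point $(x_o,t_o)$ of $u-\phi$ with $t_o>0$. Since the time dependence of $u=ct-U(x)+r$ is the smooth term $ct$ and $t_o$ is interior, differentiating in $t$ gives $\partial_t\phi(x_o,t_o)=c$. Freezing $t=t_o$, the spatial map $x\mapsto u(x,t_o)-\phi(x,t_o)=ct_o+r-U(x)-\phi(x,t_o)$ has a local minimum at $x_o$; equivalently $U-(-\phi(\cdot,t_o))$ has a local maximum at $x_o$, so $-\phi(\cdot,t_o)$ is an admissible test function for the $v$-convexity of $U$. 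Strict $v$-convexity then yields $X^2\big(-\phi\big)(x_o,t_o)\geq\alpha I$, that is $X^2\phi(x_o,t_o)\leq-\alpha I$.

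I would then split into the two cases of Theorem \ref{thmbg}. If $X\phi(x_o,t_o)=0$, the criterion requires a sign condition on $\partial_t\phi$ only when in addition $X^2\phi(x_o,t_o)=\mathbb{O}$; but $X^2\phi(x_o,t_o)\leq-\alpha I$ with $\alpha>0$ forces $X^2\phi(x_o,t_o)\neq\mathbb{O}$, so this case is \emph{vacuous} and nothing has to be checked. This is exactly the leverage provided by Theorem \ref{thmbg}. If instead $X\phi(x_o,t_o)\neq0$, set $q=X\phi$, $\hat q=q/\abs{q}$, $A=X^2\phi$ and $P=I-\hat q\otimes\hat q\geq0$, with $\tr P=m-1$. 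Writing $F(q,A)=\tr[P(-A)]$ and using $-A\geq\alpha I$ together with the elementary fact that $\tr[PM]\geq0$ whenever $P,M\geq0$, I obtain $F(q,A)=\tr[P(-A-\alpha I)]+\alpha\,\tr P\geq(m-1)\alpha$. Hence $\partial_t\phi+F(x_o,t_o,X\phi,X^2\phi)=c+F\geq c+(m-1)\alpha\geq0$ precisely when $c\geq-(m-1)\alpha$, and by Theorem \ref{thmbg} this shows that $u$ is a supersolution of (mcfe).

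For the extinction statement I would take $c=-(m-1)\alpha$, $r>0$, $U\geq0$, and let $w$ be the viscosity solution of (mcfe) with $w(\cdot,0)=u(\cdot,0)=r-U$; this solution defines the geometric flow $\Gamma_t$ of the front $\Gamma_0=\{U=r\}$, with inner region $\Omega_t=\{x:w(x,t)>0\}$. Coupling the supersolution property just proved with the comparison principle available in step two groups, and since $w(\cdot,0)=u(\cdot,0)$, one gets $w\leq u$ on $\R^n\times[0,+\infty)$. Therefore $\Omega_t\subseteq\{x:u(x,t)>0\}=\{x:U(x)<r-(m-1)\alpha t\}$. For $t\geq\bar t=r/((m-1)\alpha)$ the right-hand side is $\{U<0\}=\emptyset$ because $U\geq0$, so $\Omega_t=\emptyset$ and the front is extinct no later than $\bar t$.

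The computational heart of the first part is routine once Theorem \ref{thmbg} is invoked: the trace estimate $\tr[P(-A)]\geq(m-1)\alpha$ and the remark that strict $v$-convexity eliminates the doubly-singular case $X\phi=0,\,X^2\phi=\mathbb{O}$. The genuinely delicate point is the second part, which rests on a comparison principle; since the paper stresses that such a principle is not yet available in full generality, the extinction conclusion is conditional on one, and I would also need to check that the chosen initial datum $r-U$ is admissible for it (e.g. suitable growth or regularity of $U$ and the structure of the level set method that makes $\Gamma_t$ independent of the representative of $\Gamma_0$).
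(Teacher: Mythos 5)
Your proposal is correct and follows essentially the same route as the paper: invoke Theorem \ref{thmbg}, use strict $v$-convexity to get $X^2\phi(x_o,t_o)\leq-\alpha I$ so that the singular case $X\phi=0$, $X^2\phi=\mathbb{O}$ is vacuous, then the trace estimate gives $c+(m-1)\alpha\geq0$, and extinction follows from the barrier/comparison argument. Your write-up merely spells out the trace inequality and the comparison step in more detail than the paper does, with the same explicit caveat that the extinction conclusion is conditional on a comparison principle.
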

\begin{proof}
In order to check the supersolution condition, we use the alternative definition as in Theorem \ref{thmbg}. Let $\varphi\in C^2(\R^n\times(0,+\infty))$ be such that $u-\varphi$ has a minimum at $(x_o,t_o)$. Since $U-(-\varphi(\cdot,t_o))$ has a maximum at $x_o$ and $U$ is strictly $v-$convex, then $-X^2\varphi(x_o,t_o)\geq \alpha I$ for some $\alpha>0$. Therefore it cannot be $X\varphi(x_o,t_o)=0$ if $\varphi$ is an appropriate test function, and then
$$\partial_t\varphi(x_o,t_0)-\hbox{tr }\left(\left(I-\frac{X\varphi(x_o,t_o)}{|X\varphi(x_o,t_o)|}\otimes\frac{X\varphi(x_o,t_o)}{|X\varphi(x_o,t_o)|}\right)X^2\varphi(x_o,t_o)\right)\geq c+(m-1)\alpha\geq0,$$
provided $c\geq-(m-1)\alpha$.

The zero sublevel set of the supersolution $u$ becomes a barrier if a comparison principle holds. At time $t$, if $c=-(m-1)\alpha$,  it is given by $\{x:u(x,t)\geq 0\}=\{x:U(x)\leq-(m-1)\alpha t+r\}$ and becomes empty if $t> r/({(m-1)\alpha})$.
\end{proof}
In the previous proposition, the front may have characteristic points, as we see in some more explicit examples below.

To simplify, we now specialise to Heisenberg like groups.
Building supersolutions seems to be easier than subsolutions in particular if characteristic points are present.
Below we consider as reference space $\R^n=\R^{m}\times\R\ni x=(x_h,x_v)$, $m\geq2$ and suppose that $\sigma(x_h,x_v)=\;^t(I_{ m},Bx_h)$, where $^tB=-B=B^{-1}$ is an $m\times m$ matrix. Notice that then $Bx_h\cdot x_h=0$, $B^2=-I_{ m}$ and $|Bx_h|=|x_h|$.

\begin{exa}
In the first example we avoid characteristic points. For $c,
r\in\R$, consider the family of functions $w(x,t)=ct-|x_h|^2+r$. We easily get that
$$Xw(x,t)=-2x_h,\quad X^2w(x,t)=-2I_{m\times m}.
$$
In particular $|x_h|^2$ is strictly $v-$convex, we can compute exactly the operator
$$\begin{array}{cc}
w_t(x,t)-\hbox{tr}\left(X^2w(x,t)-\frac{X^2wXw\otimes Xw(x,t)}{|Xw(x,t)|^2}\right)=
c+2(m-1)
\end{array}$$
and thus by Theorem \ref{thmbg} and Proposition \ref{propsupsol}, $w$ is a supersolution for $c\geq-2(m-1)$ and a subsolution for $c\leq -2(m-1)$ in $\R^n\times(0,+\infty)$, so $w$ is a viscosity solution, for $c=-2(m-1)$.
Notice that for $r>0$ the zero level set of $w$ is a cylinder with axis $\{x:x_h=0\}$ and it goes extinct at time $t=r/(2(m-1))$.
\end{exa}
In general it is not as easy to find explicit solutions.
\begin{exa}
We consider a function built on the gauge function of the Heisenberg group, namely a variation of the homogeneous norm
$$u(x,t)=ct-G(x_h,x_v)+r,\quad \hbox{where }G(x_h,x_v)=|x_h|^4+4|x_v|^2,$$
and $c,r$ are constants to be decided later.
Notice that the zero level set of $u$ is (we will always regard $r>0$ for convenience)
$$\{(x,t):u(x,t)=0\}=\{(x,t):G(x,t)=r+ct\},$$
therefore it is the boundary of a ball for the distance $G^{1/4}$ centred at the origin. It has characteristic points, namely points where $XG(x,t)=0$ precisely in its intersection with the axis $x_h=0$, as we readily see below.
We can easily compute (here we will do complete calculations and not only the signature of $X^2G$ because we also want to check the subsolution condition)
$$XG(x,t)=\sigma(x) \;^t\nabla G(x,t)=4|x_h|^2x_h+8x_v \;^t(Bx_h),\quad |XG(x,t)|^2=16|x_h|^2G(x,t),$$
$$\begin{array}{cc}
X^2G(x,t)=\;^t\sigma\; D^2G\;\sigma(x)=(I_{m\times m},Bx_h)\left(\begin{array}{cc}8x_h\otimes x_h+4|x_h|^2I_{m\times m}\quad&0\\0&8
\end{array}\right)\;^t(I_{m\times m},\;Bx_h)\\
=8x_h\otimes x_h+4|x_h|^2I_{m\times m}+8Bx_h\otimes Bx_h\geq0.
\end{array}$$
Therefore $G$ is v-convex but not strictly v-convex.
Finally
$$XG\cdot X^2G(x,t)XG(x,t)=(48|x_h|^4x_h+96x_v|x_h|^2\;Bx_h)\cdot XG(x,t)=192|x_h|^4G(x,t).
$$
and since $u$ is smooth, we conclude that, for $x_h\neq0$,
$$\begin{array}{cc}
u_t(x,t)-\hbox{tr}\left(X^2u(x,t)-\frac{X^2uXu\otimes Xu(x,t)}{|Xu(x,t)|^2}\right)=
c+\hbox{tr}\left(X^2G(x,t)-\frac{X^2GXG\otimes XG(x,t)}{|XG(x,t)|^2}\right)\\
=c+(8+4m+8)|x_h|^2	-12|x_h|^2=c+4n|x_h|^2.
\end{array}$$ 
Now we can use our alternative definition to obtain that the viscosity super/subsolution condition is satisfied also at points where the horizontal gradient vanishes.
We conclude that:
\begin{itemize}
\item[(i)]{for $c\geq0$, $u$ is a global supersolution in $\R^n\times\R_+$, since $u_t\geq0$; }
\item[(ii)]{for $c<0$, $u$ is a subsolution but only in the cylinder $\{x:|x_h|< \sqrt{\frac{-c}{4n}}\}$ around the axis $x_h=0$. }
\end{itemize}
Compared to the previous example, now $u$ may itself be a test function and then we cannot fulfil a super or subsolution condition just by the lack of test functions.

Notice that the level sets of $u$ in the supersolution case, which are propagating (super)fronts, have radius nondecreasing in time and it may even be stationary for $c=0$. Instead the radius decreases in time in the subsolution case where however the diameter of the section of the domain of the subsolution vanishes with $c$. The zero level set at time $t=0$ is contained in the cylinder in (ii) provided $-c>4n\sqrt{r}$ and it goes extinct at time $t=-r/c$.
\end{exa}
\begin{exa}
Similar calculations of the previous example can be made for $w(x,t)=ct-|x|^2+r$ we get
$$Xw(x,t)=-(2x_h+2x_vBx_h),\quad X^2w(x,t)=-2I_{m\times m},\quad |Xw(x,t)|^2=-4|x_h|^2(1+x_v^2)
$$
and therefore
$$\begin{array}{cc}
w_t(x,t)-\hbox{tr}\left(X^2w(x,t)-\frac{X^2wXw\otimes Xw(x,t)}{|Xw(x,t)|^2}\right)=
c+2(m-1)+2\frac{|x_h|^2}{1+x_v^2}.
\end{array}$$
Again by Theorem \ref{thmbg}, Proposition \ref{propsupsol} and since $|x|^2$ is strictly v-convex, $w$ is a supersolution in $\R^n\times(0,+\infty)$, for $c\geq-2(m-1)$, and a subsolution for $c<-2(m-1)$
 in the open sets $\{x\in\R^n:|x_h|^2<\gep(1+x_v^2)\}$ if $\gep$ is sufficiently small. 
\end{exa}

We now construct a modification of the second example to build a global subsolution of the mean curvature flow equation whose level sets have characteristic points. We first prove a lemma on change of variables for the horizontal mean curvature operator.
\begin{lem} Let $U\in C^2(\R^n)$ and $\psi:\R\to\R$ be smooth with $\psi'>0$. Then for $W=\psi(U)$, if $XU\neq0$, we have
$$-\hbox{tr }\left(X^2W-\frac{X^2W\;XW\otimes XW(x)}{|XW(x)|^2}
\right)=-\psi'(U)\hbox{tr }\left(X^2U-\frac{X^2U\;XU\otimes XU(x)}{|XU(x)|^2}
\right).$$
\end{lem}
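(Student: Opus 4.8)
The plan is to observe that the left-hand side is nothing but the mean curvature operator $F$ of (\ref{mean curvature hamiltonian dim m}) evaluated at $(XW,X^2W)$, so that the whole statement collapses to the single identity $F(XW,X^2W)=\psi'(U)\,F(XU,X^2U)$. Once the horizontal gradient and horizontal Hessian of the composition $W=\psi(U)$ are computed, the pair $(XW,X^2W)$ will turn out to have exactly the form to which the geometric property (F3) applies, and the conclusion follows with essentially no trace manipulation.

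First I would compute the horizontal derivatives of $W$. Each $X_j$ is a first order differential operator, i.e.\ a derivation, so the ordinary chain rule yields $X_jW=\psi'(U)\,X_jU$ and hence $XW=\psi'(U)\,XU$; since $\psi'>0$ this shows $XW\neq0$ wherever $XU\neq0$, so the quotients in the statement are well defined. Applying a second vector field and symmetrising --- the first order terms in the second derivatives cancelling because $\sigma$ depends only on the horizontal variables, exactly as recalled right after the definition of $X^2u$ --- one gets
$$X^2W=\psi'(U)\,X^2U+\psi''(U)\,XU\otimes XU.$$
The same formula follows from $X^2W=\;^t\sigma\,D^2W\,\sigma$ together with $D^2W=\psi'(U)\,D^2U+\psi''(U)\,\nabla U\otimes\nabla U$ and the identity $\;^t\sigma\,(\nabla U\otimes\nabla U)\,\sigma=XU\otimes XU$.

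It then remains to substitute these two expressions into $F$. Writing $p=XU$, $A=X^2U$, $\lambda=\psi'(U)>0$ and $\mu=\psi''(U)$, the computations above read $XW=\lambda p$ and $X^2W=\lambda A+\mu\,(p\otimes p)$, which is precisely the configuration governed by the geometric property (\ref{geometric condition}) of the operator in (\ref{mean curvature hamiltonian dim m}). Consequently
$$F(XW,X^2W)=F\big(\lambda p,\lambda A+\mu\,(p\otimes p)\big)=\lambda\,F(p,A)=\psi'(U)\,F(XU,X^2U),$$
which, written out in the trace form of (\ref{mean curvature hamiltonian dim m}), is exactly the asserted identity.

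The only genuine computation is the horizontal Hessian of the composition, where one must be careful with the symmetrisation and with the cancellation of the first order terms; the point to get right is the rank one correction $\psi''(U)\,XU\otimes XU$. After that the conclusion is automatic, and indeed one can verify it directly without invoking (F3): the added term contributes $\psi''(U)\,\abs{XU}^2$ to $\tr X^2W$ and the identical amount to $\;^t(XW)\,X^2W\,(XW)/\abs{XW}^2$, so it disappears in the operator, while the common factor $\psi'(U)$ comes out by the $0$-homogeneity of the operator in the gradient slot. The main (very mild) obstacle is thus purely bookkeeping in the Hessian step; no analytic difficulty arises since $U$ is $C^2$ and $XU\neq0$ on the set where the identity is claimed.
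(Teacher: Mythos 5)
Your proposal is correct and follows essentially the same route as the paper: both proofs reduce to the chain-rule computations $XW=\psi'(U)\,XU$ and $X^2W=\psi'(U)\,X^2U+\psi''(U)\,XU\otimes XU$, after which the rank-one correction drops out of the curvature operator. The only cosmetic difference is that you package the final cancellation as an instance of the geometric property (F3) applied to the operator in (\ref{mean curvature hamiltonian dim m}) with $\lambda=\psi'(U)$ and $\mu=\psi''(U)$, whereas the paper verifies it by computing $\hbox{tr}\,X^2W$ and $X^2W\,XW\cdot XW$ directly and observing that the $\psi''$ terms cancel — a verification you also supply.
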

\begin{proof} It is just a matter of computing terms. We obtain
$$\nabla W=\psi'(U)\nabla U,\quad XW(x)=\psi'(U)\;XU(x),\quad |XW(x)|^2=(\psi'(U))^2\;|XU(x)|^2$$
$$\begin{array}{ll}D^2W(x)&=\psi''(U)\;\nabla U\otimes \nabla U(x)+\psi'(U)\;D^2U(x), \\ X^2W(x)&=
\psi''(U)\;X U\otimes X U(x)+\psi'(U)\;X^2U(x)
\end{array}$$
$$\begin{array}{ll}
\hbox{tr }X^2W(x)&=\psi''(U)\;|X U|^2+\psi'(U)\;\hbox{tr }X^2U(x),\\
X^2W\;XW\cdot XW(x)&=(\psi'(U))^2(\psi''(U)\;|XU(x)|^4+\psi'(U)\;X^2U\;XU\cdot XU(x)).
\end{array}$$
Finally, putting things together the first two terms in the previous equations cancel out.
\end{proof}
\begin{exa}
In this example we consider the function
$$v(x,t)=ct-G(x_h,x_v)^{1/2}+r$$
which now is not differentiable at points $(0,t)\in\R^n\times(0,+\infty)$.
However $v$ is locally Lipschitz continuous and is differentiable in the group of variables $x_h$. Moreover there is no smooth test function such that $v-\phi$ has a local minimum at $(0,t)$, and if $v-\phi$ has a local maximum at $(0,t)$, then $\phi_t(0,t)=c$ and $\nabla_{x_h}\phi(0,t)=0$ so that $X\phi(0,t)=0$ since $\sigma(0)=(I_{m\times m},0)$. Therefore to check the mean curvature flow equation at such points we only need to look at the sign of $c$ by Theorem \ref{thmbg}.

We now proceed at points such that $x_h\neq0$. We use the lemma with $\psi(s)=s^{1/2}$, so that $\psi'(s)=1/(2\psi(s))$ and the calculations of the previous example.
Again the zero level sets of $v$ are
$$\{(x,t):v(x,t)=0\}=\{(x,t):G(x,t)=(r+ct)^2\},$$
and we check the equation at non characteristic points.
We obtain, by the lemma,
$$\begin{array}{cc}
v_t(x,t)-\hbox{tr}\left(X^2v(x,t)-\frac{X^2vXv\otimes Xv(x,t)}{|Xv(x,t)|^2}\right)=
c+\frac1{2G(x,t)^{1/2}}4n|x_h|^2.
\end{array}$$
We conclude that $v$ is a supersolution for $c\geq0$ as before, but now, since
${|x_h|^2}\leq{G(x,t)^{1/2}}$,
$v$ becomes a global subsolution for $c\leq-2n$. If $c=-2n$, the extinction time of the zero level set of the subsolution is $t=r/(2n)$. Finally notice that all functions of the family share the same initial condition at time $t=0$ independently of $c$.
\end{exa}

\section{A geometric definition of generalised flow in Carnot groups}

In this section we extend the definition of generalised super and subflows introduced by Barles-Souganidis \cite{basou}, later revisited by Barles and Da Lio \cite{badl}, to the setting of level set equations in Carnot groups, also in view of the ideas described in Section 3. This more geometric definition turns out to determine uniquely the geometric flow of a hypersurface if the usual level set equation determines a unique evolution with empty interior (no fattening). This definition has been proven to be much more efficient when dealing with singularly perturbed problems that give rise to geometric flows and we will use it in \cite{deso5} to extend to the Carnot group setting the classical Allen-Cahn approach.
In the following definition we follow \cite{badl} with one modification, see Remark \ref{remflow}.

\begin{defin}\label{generalized flow in Carnot group def} Let $F:\R^n\times(0,+\infty)\times\R^m\backslash\{0\}\times\mathcal S^m$ be locally bounded and satisfying (F1-2-3), and let $G$ be defined as in (\ref{eqopg}).
A family $\open$ (resp. $\close$) of open (resp. close) subsets of $\RRn$ is called a \emph{generalized superflow} (resp. \emph{subflow}) with normal velocity $-F$ if, for any $x_0\in\RRn$, $t\in(0,T)$, $r>0$, $h>0$ so that $t+h<T$ and for any smooth function $\phi:B(x_0,r]\times[t,t+h]\rightarrow\RR$ such that:
\begin{description}
  \item[(i)] $\partial_t\phi(x,s)+G^*(x,t,\nabla \phi(x,s),D^2\phi(x,s))<0$ in $B(x_0,r]\times[t,t+h]$\\ (resp. $\partial_t\phi(x,s)+G_*(x,t,\nabla \phi(x,s),D^2\phi(x,s))>0$ in $B(x_0,r]\times[t,t+h]$),
  \item[(ii)] for any $s\in[t,t+h]$, $\{x\in B(x_0,r]:\phi(x,s)=0\}\neq\emptyset$ and
  $$|\nabla \phi(x,s)|\neq0\mbox{ on }\{(x,s)\in B(x_0,r]\times[t,t+h]:\phi(x,s)=0\},$$
  \item[(iii)] if there exists a pair $(x,s)\in B(x_0,r]\times[t,t+h]$ so that $\abs{X\phi(x,s)}=0$, then it holds also $\abs{X^2\phi(x,s)}=0$,
  \item[(iv)] $\{x\in B(x_0,r]:\phi(x,t)\geq0\}\subset\Omega_t$ (resp. $\{x\in B(x_0,r]:\phi(x,t)\leq0\}\subset\mathcal{F}_t^c$),
  \item[(v)] for all $s\in[t,t+h]$, $\{x\in \partial B(x_0,r]:\phi(x,s)\geq0\}\subset\Omega_s$ (resp. $\{x\in \partial B(x_0,r]:\phi(x,s)\leq0\}\subset\mathcal{F}_s^c$),
\end{description}
then we have
$$\{x\in B(x_0,r]:\phi(x,s)>0\}\subset\Omega_s,\quad
(\mbox{resp. }\{x\in B(x_0,r]:\phi(x,s)<0\}\subset\mathcal{F}_s^c,)$$
for every $s\in(t,t+h)$.

A family $\open$ of open subsets of $\RRn$ is called a \emph{generalized flow} with normal velocity $-F$ if $\open$ is a superflow and  $(\overline{\Omega}_t)_{t\in(0,T)}$ is a subflow.
\end{defin}
\begin{rem}\label{remflow}
The previous definition focuses on evolution of sets directly instead of looking at the level sets of the solutions of a differential equation. It does this by assuming local comparison with smooth evolutions. Indeed when checking if a collection of open sets provides a superflow, (i) requires the smooth function $\phi$ to be a local strict subsolution, (ii) assumes that the zero level set of $\phi$ is smooth, (iv)-(v) require compatible initial and boundary conditions in the local cylinder between the family of sets and the smooth evolution. The condition (iii) is new and we add it to restrict the family of test functions in view of what we did in Section 3.
As we will see from the proof of the characterisation Theorem \ref{flow-viscositysol Carnot group} below, in view of our Theorem \ref{thmbg}, the condition (iii) can be present or not, the corresponding definition would be equivalent.

It follows immediately by Definition \ref{generalized flow in Carnot group def} that a family $\open$ of open subsets of $\RRn$ is a generalised superflow with normal velocity $-F$ if and only if $(\Omega_t^c)_{t\in(0,T)}$ is a generalised subflow with normal velocity $F$.
\end{rem}

We now state and prove the following result which describes the connection between generalised flows and solutions of (\ref{eqlevelset}).

\begin{thm} \label{flow-viscositysol Carnot group}
\begin{description}
\item[(i)]Let $\open$ be a family of open subsets of $\RRn$ such that the set $\Omega:=\bigcup_{t\in(0,T)}\Omega_t\times\{t\}$ is open in $\RRn\times[0,T]$. Then $\open$ is a generalised superflow with normal velocity $-F$  if and only if the function $\chi=\mathds{1}_{\Omega}-\mathds{1}_{\Omega^c}$ is a viscosity supersolution of (\ref{eqlevelset}).
\item[(ii)]Let $\close$ be a family of closed subsets of $\RRn$ such that the set $\mathcal{F}:=\bigcup_{t\in(0,T)}\mathcal{F}_t\times\{t\}$ is closed in $\RRn\times[0,T]$. Then $\close$ is a generalised subflow with normal velocity $-F$ if and only if the function $\overline\chi=\mathds{1}_{\mathcal{F}}-\mathds{1}_{\mathcal{F}^c}$ is a viscosity subsolution of (\ref{eqlevelset}).
\end{description}
\end{thm}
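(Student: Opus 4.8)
The plan is to prove part (i) in full and to deduce part (ii) from it by the complementation symmetry recorded in Remark \ref{remflow} together with the $u\mapsto -u$ symmetry of viscosity sub/supersolutions: $(\mathcal F_t)$ is a subflow with velocity $-F$ iff $(\mathcal F_t^c)$ is a superflow with the dual velocity, and $\bar\chi=-\chi'$ with $\chi'=\mathds 1_{\mathcal F^c}-\mathds 1_{\mathcal F}$, so the subsolution statement for $\bar\chi$ mirrors the supersolution statement for $\chi'$ obtained from (i) applied to $-F$. For (i) I first note that, since $\Omega$ is open, $\chi=\mathds 1_\Omega-\mathds 1_{\Omega^c}$ is lower semicontinuous, hence a legitimate candidate supersolution, and then I split into the two implications.

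\textbf{Supersolution $\Rightarrow$ superflow.} Assume $\chi$ is a supersolution and fix $\phi$ satisfying (i)--(v) on $Q=\bar B(x_0,r)\times[t,t+h]$. Because the equation is geometric, I may first replace $\phi$ by $\psi(\phi)$ for a smooth bounded increasing $\psi$ with $\psi(s)=s$ near $s=0$: this leaves the zero level set and its neighbourhood (hence (ii), (iii), (iv), (v)) untouched, preserves the strict subsolution inequality (i) where $\psi'>0$, and achieves $\phi<1$ on $Q$. Arguing by contradiction, suppose $s^*\in(t,t+h)$ and $x^*$ satisfy $\phi(x^*,s^*)>0$ and $x^*\notin\Omega_{s^*}$, so $\chi-\phi\le -1-\phi(x^*,s^*)<-1$ there. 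Minimising the lsc function $\chi-\phi$ over the compact set $Q$, conditions (iv) and (v) together with the bound $\phi<1$ force $\chi-\phi>-1$ on the parabolic boundary $\{s=t\}\cup(\partial B(x_0,r)\times[t,t+h])$; subtracting a small $\epsilon(s-t)$ from $\phi$ pushes the minimiser away from the top time $s=t+h$. Hence the minimum, which is $<-1$, is attained at an interior point $(\bar x,\bar s)$, where $\phi-\epsilon(s-t)+\mathrm{const}$ is an admissible test function touching $\chi$ from below. The supersolution property then yields $\partial_t\phi(\bar x,\bar s)-\epsilon+G^*(\bar x,\bar s,\nabla\phi,D^2\phi)\ge0$, which contradicts the strict inequality (i); the same conclusion follows if the supersolution is tested with $G_*$, since $G_*\le G^*$. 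Note that this direction never uses (iii).

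\textbf{Superflow $\Rightarrow$ supersolution.} Assume $(\Omega_t)$ is a superflow and suppose, for contradiction, that $\chi$ is not a supersolution. By Theorem \ref{thmbg} the failure occurs at a (WLOG strict) local minimum $(\hat x,\hat t)$ of $\chi-\phi$ of one of two types: (a) $X\phi(\hat x,\hat t)\ne0$ with $\partial_t\phi+F(\hat x,\hat t,X\phi,X^2\phi)<0$, or (b) $X\phi(\hat x,\hat t)=0$, $X^2\phi(\hat x,\hat t)=0$ and $\partial_t\phi(\hat x,\hat t)<0$. Since $\chi\equiv1$ on the open set $\Omega$, a short computation rules out a failure at interior points of $\Omega$, so $(\hat x,\hat t)\in\partial\Omega$ and $\chi(\hat x,\hat t)=-1$; after normalising $\phi(\hat x,\hat t)=-1$ the strict minimality gives $\phi<\chi$ on a punctured neighbourhood. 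I then build a smooth strict subsolution $\Phi$ on a small cylinder $\bar B(\hat x,r)\times[\hat t-h,\hat t]$ by perturbing $\phi$ with a small negative time drift (turning the failure inequality into the strict condition (i) throughout the cylinder, using Remark \ref{remtech} to see that at the characteristic point $G^*=0$ so (i) reduces to $\partial_t\Phi<0$) and, in case (b), with a term depending on the vertical variable $x_v$ chosen so that $\nabla\Phi\ne0$ on the smooth zero level set (condition (ii)) while $X\Phi$ and $X^2\Phi$ still vanish at the characteristic point, compatibly with (iii); here I use the explicit structure (\ref{eqcarnotstr}) and Lemma \ref{lemma:property norm in Carnot group2}. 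The inclusions (iv) at time $\hat t-h$ and (v) on the lateral boundary follow from $\phi<\chi$ once $r,h$ are small. The superflow property then forces $\{\Phi(\cdot,s)>0\}\subset\Omega_s$ for $s\in(\hat t-h,\hat t)$, whereas the construction makes $\Phi$ positive along a sequence $(x_k,s_k)\to(\hat x,\hat t)$ with $s_k<\hat t$ and $x_k\notin\Omega_{s_k}$ (available because $(\hat x,\hat t)\in\partial\Omega$), a contradiction.

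The main obstacle is the test-function construction in the second implication: at a characteristic point one must simultaneously guarantee smoothness and nonvanishing of the full gradient on the zero level set (ii), the restriction (iii) forcing $X^2\Phi=0$ wherever $X\Phi=0$, and the strict subsolution inequality (i), all while keeping the initial and lateral inclusions and pinning down the direction from which $\partial\Omega$ is approached near $\hat t$. The freedom to modify the vertical dependence without disturbing the horizontal derivatives at $\hat x$ is precisely what makes this possible, and it rests on the step-two structure (\ref{eqcarnotstr}). A secondary technical point is that $\chi$ is merely lower semicontinuous, so the minimisation in the first implication must be carried out with the semicontinuous envelopes $G_*,G^*$ and a time penalisation. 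Finally, Theorem \ref{thmbg} is what reconciles the (iii)-restricted class of test functions with the full viscosity notion, which is exactly why, as anticipated in Remark \ref{remflow}, the presence or absence of condition (iii) yields equivalent definitions.
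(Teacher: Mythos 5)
Your first implication (supersolution $\Rightarrow$ superflow) and the easy part of the converse are essentially the paper's argument, and reducing (ii) to (i) by complementation is legitimate. The genuine gap is in the converse at a point where the full Euclidean gradient vanishes. First, the case split must be on $\nabla\phi(\hat x,\hat t)=0$ versus $\nabla\phi(\hat x,\hat t)\neq0$, not on $X\phi=0$ versus $X\phi\neq0$: condition (ii) of Definition \ref{generalized flow in Carnot group def} involves the full gradient, so when $\nabla\phi\neq0$ but $X\phi=0$ the simple time--drift perturbation already works, and the only hard case is $\nabla\phi(\hat x,\hat t)=0$. There your construction does not go through: you propose $\Phi=\phi+\text{drift}+g$ with $g$ depending on the vertical variable, so as to get $\nabla\Phi\neq0$ on the zero level set while keeping $X\Phi=0\Rightarrow X^2\Phi=0$. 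Even taking $g$ in group coordinates $g((\hat x^{-1}\circ y)_v)$, which kills $X g$ and $X^2g$ only at $y=\hat x$, you still must verify (ii) ($\nabla\Phi\neq0$ on the \emph{whole} zero level set, which is nonempty for each $s$), (iii) on the whole cylinder (the set $\{X\Phi=0\}$ is not just $\{\hat x\}$), and the strict inequality (i) uniformly near the characteristic set where $F$ is singular; none of these is established for a perturbation of a $\phi$ whose Hessian is uncontrolled. The paper avoids this entirely: it first invokes the \cite{bage}-type refinement of Theorem \ref{thmbg} to assume that \emph{all space derivatives of $\phi$ up to order four} vanish at $(\hat x,\hat t)$, so that Taylor gives $\phi(y,s)=\partial_t\phi(\hat x,\hat t)(s-\hat t)+o(|s-\hat t|+|y-\hat x|^4)$, and then replaces $\phi$ wholesale by $\psi_\beta(y,s)=\tfrac a2(s-\hat t)-\gep C_r N(\hat x^{-1}\circ y)+\beta$ built from the gauge (\ref{norm}); the fourth-order vanishing is exactly what makes $\psi_\beta\le\phi+2\beta$ (hence (iv)--(v)), and Lemma \ref{lemma:property norm in Carnot group2} plus the geometric property and upper semicontinuity of $G^*$ give (i)--(iii) for $\psi_\beta$. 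You never invoke this fourth-order reduction, and without it no comparison between $\phi$ and a quartic gauge is available.

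Two further points in the same step would also derail the contradiction as written. You normalise $\phi(\hat x,\hat t)=-1$ and add a \emph{negative} time drift, so $\Phi(\hat x,\hat t)<0$ and $\{\Phi>0\}$ stays away from $\hat x$; the paper normalises $\phi(\hat x,\hat t)=0$ and arranges $\psi_\beta(\hat x,\hat t)=\beta>0$ (resp. $\phi_\delta(\hat x,\hat t)=\delta h>0$), which is what produces $\hat x\in\Omega_{\hat t}$. Moreover your cylinder $\bar B(\hat x,r)\times[\hat t-h,\hat t]$ yields the superflow conclusion only for $s\in(\hat t-h,\hat t)$, so you must exhibit points outside $\Omega$ at times $s<\hat t$ where $\Phi>0$; since $\Omega$ is open, such points need not exist (e.g. $\Omega$ may contain $B(\hat x,\rho)\times(\hat t-h,\hat t)$ while $\hat x\notin\Omega_{\hat t}$, compatibly with lower semicontinuity of $\chi$). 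The paper sidesteps this by working on $[\hat t-h,\hat t+h']$, so the conclusion applies at $s=\hat t$ itself and $\psi_\beta(\hat x,\hat t)>0$ immediately contradicts $\chi(\hat x,\hat t)=-1$.
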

\begin{proof} We adapt to our situation some of the ideas in \cite{badl} and only consider (i) as the other case is similar.
We first assume that  $\chi=\mathds{1}_{\Omega}-\mathds{1}_{\Omega^c}$ is a supersolution of (\ref{eqlevelset}) and we show that $\open$ is a generalised superflow. To do this we consider a smooth function $\phi$, a point $(x_0,t)\in\RRn\times(0,T)$ and $r,h>0$ satisfying conditions (i--v) in Definition \ref{generalized flow in Carnot group def}. We assume that $\phi\leq1$  in $B(x_0,r]\times[t,t+h]$ (otherwise we change $\phi$ with $\eta\phi$ for $\eta>0$ small enough and we use the homogeneity of $F$).  We consider
$$m:=\min\{\chi(x,s)-\phi(x,s):(x,s)\in B(x_0,r]\times[t,t+h]\}.$$
Since $\phi$ satisfies condition (i), $\chi$ is a supersolution of equation (\ref{eqlevelset}) in $B(x_0,r)\times(t,t+h)$ and it is well known, see e.g. \cite{bcd}, that $\chi$ is therefore also a supersolution in $B(x_0,r)\times(t,t+h]$, we deduce that the minimum $m$ has to be attained either in $\partial B(x_0,r)$ or at time $t$.

Let $(x,s)\in(\partial B(x_0,r)\times[t,t+h])\cup (B(x_0,r]\times\{t\})$. If $x\in\Omega_s$, then $\chi(x,s)=1$ and $(\chi-\phi)(x,s)\geq0$ because $\phi\leq1$ in $B(x_0,r]\times[t,t+h]$. If instead $x\not\in\Omega_s$, then $\chi(x,s)=-1$ and, by (iv) and (v), $(\chi-\phi)(x,s)\geq-1+\delta$ for some $\delta>0$. In any case we can conclude that
$$\chi(y,s)-\phi(y,s)\geq-1+\delta,\quad (y,s)\in B(x_0,r]\times[t,t+h],$$
in particular $\phi(y,s)\leq-\delta$, if $y\notin\Omega_s$. This means that for every $s\in[t,t+h]$,
$$\{y\in B(x_0,r]:\phi(y,s)\geq0\}\cap\Omega_{s}^c=\emptyset,$$
which implies that $\open$ is a generalised superflow with normal velocity $-F$.

Conversely, we assume that $(\Omega_t)_{t\in(0,T)}$ is a generalised superflow and we show that $\chi$ is a supersolution of the equation (\ref{eqlevelset}) in $\RRn\times (0,T)$. We consider a point $(x,t)\in\RRn\times(0,T)$ and a function $\phi\in C^\infty(\RRn\times[0,T])$ so that $(x,t)$ is a strict local minimum point of $\chi-\phi$ and by adding a constant to $\phi$ if necessary we may assume $\phi(x,t)=0$. We want to show that
\begin{equation}\label{thesi flow-viscositysol Carnot group}
\partial_t\phi(x,t)+G^*(x,t,\nabla \phi(x,t),D^2\phi(x,t))\geq0.
\end{equation}
By using the equivalent definition of viscosity solution with a restricted family of test functions, we will suppose that
$X^2\phi(y,s)=0$ whenever $|X\phi(y,s)|=0$.

When $(x,t)$ is in the interior of either $\{\chi=1\}$ or $\{\chi=-1\}$ then $\chi$ is constant in a neighborhood of $(x,t)$ and therefore $\partial_t\phi(x,t)=0$, $\nabla \phi(x,t)=0$ and $D^2\phi(x,t)\leq0$. Since $F$ satisfies (F1-2), then
the inequality in (\ref{thesi flow-viscositysol Carnot group}) is true.
Assume instead that $(x,t)\in\partial\{\chi=1\}\cap\partial\{\chi=-1\}$. Thus, by the lower semicontinuity of $\chi$, $\chi(x,t)=-1$. We suppose by contradiction that there exists an $\alpha>0$ so that we have
$$\partial_t\phi(x,t)+G^*(x,t,\nabla \phi(x,t),D^2\phi(x,t))<-\alpha.$$
We can find $r,h>0$ such that for all $(y,s)\in B(x,r]\times[t-h,t+h]$,
\begin{equation}\label{c1 Carnotgroup}
\partial_t\phi(y,s)+G^*(y,s,\nabla \phi(y,s),D^2\phi(y,s))<-\frac{\alpha}{2}.
\end{equation}
and
\begin{equation}\label{c2 Carnotgroup}
\chi(x,t)-\phi(x,t)=-1<\chi(y,s)-\phi(y,s),\quad(y,s)\neq(x,t).\end{equation}

We consider first the case $|\nabla \phi(x,t)|\neq0$ and 
by choosing smaller $r$, $h$, we assume that $|\nabla \phi|\neq0$ in $B(x,r]\times[t-h,t+h]$.
We introduce the test function $\phi_\delta(y,s):=\phi(y,s)+\delta(s-(t-h))$, for $0<\delta\ll1$. 
Since $\phi(x,t)=0$ and $\nabla \phi(x,t)\neq0$, it is easy to see that if $h$ and $\delta$ are small enough then, for any $t-h\leq s\leq t+h$, the set $\{y\in B(x,r):\phi_\delta(y,s)=0\}$ is not empty. We observe that, for $\delta>0$ small enough, by (\ref{c1 Carnotgroup}) and (\ref{c2 Carnotgroup}), we have
\begin{equation}\label{c3}\phi_\delta(y,s)-1<\chi(y,s),\end{equation}
for all $(y,s)\in (B(x,r)\times\{t-h\})\cup(\partial B(x,r)\times[t-h,t+h])$ and
$$\partial_t\phi_\delta(y,s)+G^*(y,s,\nabla \phi_\delta(y,s),D^2\phi_\delta(y,s))<-\frac{\alpha}{4}$$
for all $(y,s)\in B(x,r]\times[t-h,t+h]$. The inequality (\ref{c3}) implies that
$$\{y\in B(x,r]:\phi_\delta(y,t-h)\geq0\}\subset\Omega_{t-h},\mbox{ and }\{y\in\partial B(x,r):\phi_\delta(y,s)\geq0\}\subset\Omega_{s},$$
for all $s\in[t-h,t+h]$.
Therefore $\phi_\delta$ satisfies (i-ii-iv-v) in Definition \ref{generalized flow in Carnot group def}.  Assumption (iii) holds as well by assumptions on function $\phi$. The definition of superflow then yields
$$\{y\in B(x,r]:\phi_\delta(y,s)>0\}\subset\Omega_s,$$
for every $s\in(t-h,t+h)$. Since $\phi_\delta(x,t)=\delta h>0$, we deduce that $x\in\Omega_t$, and this is a contradiction with $(x,t)\in\partial\{\chi=-1\}$.

Now we turn to the case when ${\nabla \phi(x,t)}=0$. In particular $X\phi(x,t)=0$, $X^2\phi(x,t)=\mathbb O$ by our assumption, and therefore
to prove (\ref{thesi flow-viscositysol Carnot group}), it is then enough to show that
$$\partial_t\phi(x,t)\geq0.$$
We further observe that by the result in \cite{bage} corresponding to our Theorem \ref{thmbg},
we could have restricted $\phi$ to the class of functions such that $\nabla \phi(x,t)=0$ implies
$$\frac{\partial^2\phi}{\partial_{x_i}\partial_{x_j}}(x,t)=\frac{\partial^3\phi}{\partial_{x_i}\partial_{x_j}\partial_{x_k}}(x,t)= \frac{\partial^4\phi}{\partial_{x_i}\partial_{x_j}\partial_{x_k}\partial_{x_l}}(x,t)=0$$
for any $i,j,k,l\in\{1,\dots,n\}$ as we do now. 
Suppose by contradiction that $a:=\partial_t\phi(x,t)<0$. Therefore by Taylor formula
$$\phi(y,s)=\partial_t\phi(x,t)(s-t)+o(\abs{s-t}+\abs{y-x}^4)\quad\mbox{as }s\to t,\;\abs{y-x}\to0.$$
Thus, for all $\gep>0$, there exist $r=r_\gep,h=h_\gep,h'=h'_\gep>0$ such that
$$h'\leq h,\quad h<-\frac{\gep r^4}{a}$$
and, for any $(y,s)\in B(x,r]\times[t-h,t+h']$
$$\begin{array}{rl}
\phi(y,s)&\geq a(s-t)+\frac{a}{2}\abs{s-t}-\gep\abs{y-x}^4\\
&=\frac{a}{2}(s-t)+a(s-t)^+-\gep\abs{y-x}^4
\geq\frac{a}{2}(s-t)-\gep\abs{y-x}^4+ah'.
\end{array}$$
Let $d_G(x,y)=\|x^{-1}\circ y\|_G$, be the distance function defined in (\ref{distance}). For any compact set $K\subset\RRn$, by known results, see e.g. Proposition 5.15.1 in \cite{bolaug}, there exists a positive constant $C_K>0$ so that
$$\frac{\abs{x-y}}{C_K}\leq d_G(x,y)\leq C_K\abs{x-y}^{1/2},$$
for any $x,y\in K$. Thus, if we put $C_r=(C_{B(x,r]})^4$, we get
$$\frac{\abs{x-y}^4}{C_r}\leq N(x^{-1}\circ y)\leq C_r\abs{x-y}^2$$
and by definition of $N$
$$\phi(y,s)\geq\frac{a}{2}(s-t)-\gep C_r N(x^{-1}\circ y)+ah'$$
for any $(y,s)\in B(x,r]\times[t-h,t+h']$.
By (\ref{c2 Carnotgroup}) we can take $\beta>0$ such that
\begin{equation*}2\beta+\phi(y,s)-1<\chi(y,s)\end{equation*}
for all $(y,s)\in (B(x,r]\times\{t-h\})\cup(\partial B(x,r)\times(t-h,t+h'))$. By taking $\beta$ smaller we may also suppose $\beta<\gep r^4/2$. We now proceed similarly as before and consider the function $\psi_\beta(y,s)=(a/2)(s-t)-\gep C_rN(x^{-1}\circ y)+\beta$. Since we can take $h'$ smaller we assume from now on that $h'\leq-\beta/a$. Combining the last two displayed inequalities and the assumptions on $\beta,h,h'$ and $r$ we get
\begin{equation}\label{c4 Carnotgroup}
\psi_\beta(y,s)-1<\chi(y,s)
\end{equation}
for all $(y,s)\in (B(x,r]\times\{t-h\})\cup(\partial B(x,r)\times[t-h,t+h'])$. Thus, with a reasoning similar to the one that we used in the previous case, it is possible to prove that $\psi_\beta$ satisfies conditions (iv) and (v) in Definition \ref{generalized flow in Carnot group def}.  Furthermore we consider  a fixed  $s\in[t-h,t+h']$. We have $\psi_\beta(x,s)=a(s-t)/2+\beta\geq ah'/2+\beta>0$ while for $\abs{y-x}=r$
$$\begin{array}{ll}
\psi_\beta(y,s)&=\frac{a}{2}(s-t)-\gep C_r d_G(x,y)^4+\beta\leq\frac{a}{2}(s-t)-\gep \abs{y-x}^4+\beta\\
&\leq-\frac{ah}{2}-\gep r^4+\beta\leq-\frac{ah+\gep r^4}{2}\leq0.
\end{array}$$
Thus the set $\{y\in B(x,r]:\psi_\beta(y,s)=0\}$ is not empty. Let $y\in B(x,r]$, we compute
$$\nabla \psi_\beta(y,s)=-\gep C_r
\left(\begin{array}{c}
4\abs{y_h-x_h}^2(y_h-x_h)-2\sum_{i=1}^{n-m}(y_{m+i}-x_{m+i}-\langle B^{(i)}x_h,y_h\rangle)B^{(i)}x_h\\
2(y_v-x_v-\langle Bx_h,y_h\rangle).
\end{array}\right)$$
Thus, since the matrices $B^{(i)}$ are skew-symmetric, $\nabla \psi_\beta(y,s)=0$ if and only if $y=x$ and therefore $\abs{\nabla \psi_\beta(y,s)}\neq0$ for every $(y,s)\in\{B(x,r]\times[t-h,t+h']:\psi_\beta(y,s)=0\}$. This proves that $\psi_\beta$ satisfies (ii) in Definition \ref{generalized flow in Carnot group def}. Moreover it satisfies also (iii) since, by Lemma \ref{lemma:property norm in Carnot group2},
$$\abs{X\psi_\beta(y,s)}=0\Leftrightarrow y_h=x_h\Leftrightarrow\abs{X^2\psi_\beta(y,s)}=0.$$
It remains to prove that (i) holds. Since $G^*$ is upper semicontinuous, $G^*(y,s,0,\mathbb O)=0$ and $G$ is geometric, we have that
$$\begin{array}{ll}
{\partial_t\psi_\beta}(y,s)&+G^*(y,s,\nabla \psi_\beta(y,s),D^2\psi_\beta(y,s))\\
&=\frac{a}{2}+G^*(y,s,-\gep C_r\nabla _yN(x^{-1}\circ y),-\gep C_rD^2_{yy}N(x^{-1}\circ y))<0,
\end{array}$$
for $(y,s)\in B(x,r]\times[t-h,t+h']$ and $\gep$ small enough.

Thus, since $\open$ is a generalised superflow, we have
$$\{y\in B(x,r]:\psi_\beta(y,s)>0\}\subset\Omega_s$$
for any $s\in(t-h,t+h')$. But again $\psi_\beta(x,t)=\beta>0$, and this means $x\in\Omega_t$, which is a contradiction.
\end{proof}
\begin{rem}
When we define generalised flows as in Definition \ref{generalized flow in Carnot group def}, then Theorem \ref{flow-viscositysol Carnot group} provides a discontinuous solution of (\ref{eqlevelset}). The discontinuous solution $\chi$ bears a natural initial condition at $t=0$ in the following way. Since $\chi$ is lower semicontinuous, we can extend it at $t=0$ by lower semicontinuity and then define a lower semicontinuous initial condition as
$$\chi_o(x)=\chi_*(x,0).$$
\end{rem}

In order to better understand the nature of Definition \ref{generalized flow in Carnot group def}, we comment briefely on the previous result by recalling the connection between the discontinuous solution of (\ref{eqlevelset}) that appears in Theorem \ref{flow-viscositysol Carnot group} and usual viscosity solutions of (\ref{eqlevelset}), see e.g. Souganidis \cite{soug} and the references therein. Suppose that $u\in C(\R^n\times[0,+\infty)$ is a viscosity solution of (\ref{eqlevelset}). Then we can define the following family of sets, for $t\geq 0$,
\begin{equation}\label{nointerior}
\Gamma_t=\{(x,t):u(x,t)=0\},\quad D_t^+=\{(x,t):u(x,t)>0\},\quad D_t^-=\{(x,t):u(x,t)<0\}.
\end{equation}
The following result is well known in the theory and contains as a consequence of Theorem \ref{flow-viscositysol Carnot group} an existence result for geometric flows. The second part of the statement is based on the validity of a comparison principle, which at the moment for equation (\ref{eqlevelset}) is valid under some restrictions as we discussed in the introduction.
\begin{thm}\label{chi viscosol}
Suppose that $u\in C(\R^n\times[0,+\infty)$ is a viscosity solution of (\ref{eqlevelset}).
With the notation in (\ref{nointerior}), the two functions $\overline\chi(x,t)=\mathds1_{D_t^+\cup\Gamma_t}(x)-\mathds1_{D_t^-}(x),$
$\underline\chi(x,t)=\mathds1_{D_t^+}(x)-\mathds1_{D_t^-\cup\Gamma_t}(x)$ are viscosity solutions of (\ref{eqlevelset}) associated respectively with the discontinuous initial data 
$$\bar w_o=\mathds1_{D_o^+\cup\Gamma_o}-\mathds1_{D_o^-},\quad \underline w_o=\mathds1_{D_o^+}-\mathds1_{D_o^-\cup\Gamma_o},$$ respectively.
In particular the family of sets $(D_t^+)_{t>0}$, $(D_t^+\cup\Gamma_t)^o_{t>0}$ are generalised flows and they coincide if and only if the no-interior condition holds: $\Gamma_t=\partial D^+_t=\partial D^-_t$, for all $t\geq 0$.

If moreover $\Gamma_o$ has an empty interior and a comparison principle holds for the equation (\ref{eqlevelset}), then $\bar \chi,\;\underline\chi$ are respectively the maximal subsolution and the minimal supersolution of the Cauchy problem coupling (\ref{eqlevelset}) with the initial condition  $w_o=\mathds1_{D_o^+}-\mathds1_{D_o^-}$ and it has a unique discontinuous solution if and only if the no-interior condition holds.
The unique solution is given by the function
 \begin{equation}\label{eqchi}
 \chi(x,t)=\mathds1_{D_t^+}(x)-\mathds1_{D_t^-}(x).\end{equation}
\end{thm}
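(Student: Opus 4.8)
The plan is to deduce the whole statement from three tools already in place: the geometric invariance of (\ref{eqlevelset}) under monotone reparametrisation (if $u$ solves the equation and $\psi$ is smooth and increasing, then $\psi(u)$ solves it as well), the stability of viscosity sub/supersolutions under half-relaxed (upper/lower) limits, and the characterisation of generalised flows in Theorem \ref{flow-viscositysol Carnot group}, with Theorem \ref{thmbg} supplying the restricted class of test functions at characteristic points. Throughout I would use that, $u$ being continuous, the sets $D_t^+=\{u>0\}$ and $D_t^-=\{u<0\}$ are open while $\{u\geq0\}=D_t^+\cup\Gamma_t$ is closed, so that $\overline\chi$ is upper semicontinuous, $\underline\chi$ is lower semicontinuous, and the two indicators agree off $\Gamma_t$.

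First I would show that $\overline\chi$ and $\underline\chi$ are discontinuous viscosity solutions of (\ref{eqlevelset}). I would approximate the nondecreasing Heaviside profile $s\mapsto\mathds1_{[0,\infty)}(s)-\mathds1_{(-\infty,0)}(s)$ and its lower semicontinuous analogue by smooth increasing $\psi_k$; each $\psi_k(u)$ is then a continuous viscosity solution by geometric invariance, and the appropriate relaxed limit as $k\to\infty$ recovers $\overline\chi$ (resp. $\underline\chi$). Stability under half-relaxed limits gives that $\overline\chi$ is a subsolution and $\underline\chi$ a supersolution, while the reverse inequalities hold because both functions are locally constant off $\Gamma_t$, where (F1)--(F2) make them automatic; at points of $\Gamma_t$ the singular inequalities are verified through the restricted test functions of Theorem \ref{thmbg}, for which $X\phi=0$ forces $X^2\phi=\mathbb O$ and only the sign of $\phi_t$ is at stake. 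The initial data $\bar w_o,\underline w_o$ are then obtained by semicontinuous extension to $t=0$. I expect this to be the most delicate point, since one must simultaneously exploit the reparametrisation, the relaxed-limit passage, and the restricted-test-function reformulation in order to sidestep the genuinely singular values of $G_*,G^*$ when the horizontal gradient vanishes, keeping careful track of which envelope ($\overline\chi$ or $\underline\chi$) carries which one-sided inequality.

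Next I would identify the flows by feeding these solutions into Theorem \ref{flow-viscositysol Carnot group}. Taking $\Omega_t=D_t^+$, the set $\bigcup_t\Omega_t\times\{t\}$ is open and its associated indicator is exactly $\underline\chi$, so part (i) converts ``$\underline\chi$ is a supersolution'' into ``$(D_t^+)_{t>0}$ is a generalised superflow''; taking $\mathcal F_t=D_t^+\cup\Gamma_t$, the set $\mathcal F$ is closed and its indicator is $\overline\chi$, so part (ii) converts ``$\overline\chi$ is a subsolution'' into ``$(D_t^+\cup\Gamma_t)_{t>0}$ is a generalised subflow''. Matching for each family the superflow of the open set with the subflow of its closure then shows that $(D_t^+)_{t>0}$ and $(D_t^+\cup\Gamma_t)^o_{t>0}$ are generalised flows. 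Their coincidence is a purely topological equivalence: $D_t^+=(\,\overline{D_t^+}\,)^o$ for every $t$ holds exactly when $\Gamma_t$ has empty interior, which is the no-interior condition $\Gamma_t=\partial D_t^+=\partial D_t^-$.

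Finally, for the comparison-based part I would argue maximality and minimality: any subsolution $w$ with $w^*(\cdot,0)\leq w_o$ compares against the supersolution built above and hence satisfies $w\leq\overline\chi$, while any supersolution $w$ with $w_*(\cdot,0)\geq w_o$ satisfies $w\geq\underline\chi$, so $\overline\chi$ is the maximal subsolution and $\underline\chi$ the minimal supersolution for the data $w_o$ (recalling $\bar w_o\geq w_o\geq\underline w_o$). Any discontinuous solution of the Cauchy problem is then squeezed between $\underline\chi$ and $\overline\chi$, so it is unique precisely when $\underline\chi=\overline\chi$, i.e. when $\Gamma_t$ is nowhere dense for all $t$; the assumption that $\Gamma_o$ has empty interior, combined with comparison, propagates the no-interior condition in time and pins down the single solution $\chi=\mathds1_{D_t^+}-\mathds1_{D_t^-}$ of (\ref{eqchi}).
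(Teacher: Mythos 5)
The paper offers no proof of this theorem: it is introduced as ``well known in the theory,'' with the reader sent to Souganidis \cite{soug} and the references therein, so there is no internal argument to compare yours against. Your sketch reproduces the standard literature route --- invariance of (\ref{eqlevelset}) under smooth nondecreasing reparametrisations, half-relaxed limits of $\psi_k(u)$, the flow/solution dictionary of Theorem \ref{flow-viscositysol Carnot group}, and comparison for the extremality statements --- which is exactly the argument the authors are alluding to, and its overall architecture is sound.

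Two steps, however, do not work as written. First, after obtaining from the upper half-relaxed limit that $\overline\chi$ is a subsolution, you dispose of the complementary supersolution property of $(\overline\chi)_*$ by saying it is automatic off $\Gamma_t$ and that on $\Gamma_t$ ``only the sign of $\phi_t$ is at stake'' thanks to the restricted test functions. That is not so: Theorem \ref{thmbg} restricts test functions only where $X\phi=0$, and at a point of $\Gamma_t$ a test function touching $(\overline\chi)_*$ from below may well have $X\phi\neq0$, in which case the full inequality involving $F$ must be verified and local constancy gives nothing, since $(\overline\chi)_*$ is not locally constant near $\Gamma_t$. The fix is to use both half-relaxed limits of the same approximating sequence: $\limsup^*\psi_k(u)=\overline\chi$ is a subsolution and $\liminf_*\psi_k(u)=(\overline\chi)_*$ is a supersolution, with no case analysis on $\Gamma_t$ at all (and similarly for $\underline\chi$). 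Second, your closing claim that the empty interior of $\Gamma_o$, combined with comparison, ``propagates the no-interior condition in time'' is false and contradicts the very statement you are proving: the theorem asserts uniqueness \emph{if and only if} the no-interior condition holds at all times precisely because an initially thin front can fatten (the classical fattening phenomenon, e.g.\ the figure-eight for Euclidean mean curvature flow). What comparison and the empty interior of $\Gamma_o$ actually give is that $\overline\chi$ and $\underline\chi$ are the maximal subsolution and minimal supersolution for the single datum $w_o$; uniqueness of the discontinuous solution is then \emph{equivalent} to, not a consequence of, $\Gamma_t=\partial D_t^+=\partial D_t^-$ for all $t$.
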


\end{document}